     \newtheorem{lemma}{\bf Lemma}[section]
     \newtheorem{theorem}{\bf Theorem}[section]
     \newtheorem{definition}{\bf Definition}[section]
     \numberwithin{equation}{section}
\begin{document}
\title{{\LARGE Effective Boundary Conditions for Heat Equation Arising from Anisotropic and Optimally Aligned Coatings in Three Dimensions}
 \footnotetext{
 E-mail addresses: gengxingri@u.nus.edu.\\ }}

\author{{Xingri Geng $^{a, b}$}\\[2mm]
\small\it{ $ ^a $Department of Mathematics}, Southern University of Science and Technology, Shenzhen, P.R. China\\
\small\it $^b$ Department of Mathematics, National University of Singapore,  Singapore 
}

\date{}

\maketitle

\begin{abstract}

We discuss the initial boundary value problem for a heat equation in a domain surrounded by a layer. The main features of this problem are twofold: on one hand, the layer is thin compared to the scale of the domain, and on the other hand, the thermal conductivity of the layer is drastically different from that of the bulk; moreover, the bulk is isotropic, but the layer is anisotropic and ``optimally aligned" in the sense that any vector in the layer normal to the interface is an eigenvector of the thermal tensor. We study the effects of the layer by thinking of it as a thickless surface, on which ``effective boundary conditions" (EBCs) are imposed. In the three-dimensional case, we obtain EBCs by investigating the limiting solution of the initial boundary value problem subject to either Dirichlet or Neumann boundary conditions as the thickness of the layer shrinks to zero. These EBCs contain not only the standard boundary conditions but also some nonlocal ones, including the Dirichlet-to-Neumann mapping and the fractional Laplacian. One of the main features of this work is to allow the drastic difference in the thermal conductivity in the normal direction and two tangential directions within the layer.
\end{abstract}

\medskip

\noindent{\bf Keywords.} heat equation, thin layer, energy estimates, asymptotic behavior, effective boundary conditions.

\medskip
\noindent{\bf AMS subject classifications.} 35K05, 35B40, 35B45,74K35.


\section{Introduction}

This paper is concerned with the scenario of insulating an isotropic conducting body with a coating whose thermal conductivity is anisotropic and drastically different from that of the body. Moreover, the coating is thin compared to the scale of the body, resulting in multi-scales in the spatial variable. The difference in thermal conductivity and spatial size leads to computational difficulty. Some examples of this type of situation include cells with their membranes and thermal barrier coatings (TBCs) for turbine engine blades (see Figure \ref{fig}). To handle such situations, we view the coating as a thickless surface as its thickness shrinks to zero,  on which ``effective boundary conditions" (EBCs) are imposed. These EBCs not only provide an alternative way for numerical computation but also give us an analytic interpretation of the effects of the coating.


The main purpose of this work is to find effective boundary conditions rigorously in a three-dimensional domain. In the article of Chen, Pond, and Wang \cite{CPW},  EBCs were studied in the two-dimensional case when the coating is anisotropic and ``optimally aligned". However, it is not straightforward to extend their results in three dimensions because a degenerate equation that never happens in two dimensions arises. This paper treats the case when the domain is three-dimensional, and the coating is ``optimally aligned" with two tangent diffusion rates that may be different, which has not been covered by the previous results yet.

\begin{center}
\begin{tikzpicture}
\def\angle{60}%
\pgfmathsetlengthmacro{\xoff}{2cm*cos(\angle)}%
\pgfmathsetlengthmacro{\yoff}{1cm*sin(\angle)}%
\draw (0,0) circle[x radius=3.5cm, y radius=2cm] ++(1.7*\xoff,1.7*\yoff) node{$\Omega_2$} ++(1.2*\xoff,-0.2*\yoff)  node{$\partial\Omega$};
\draw (0,0) circle[x radius=3cm, y radius=1.5cm] node{$\Omega_1$}
++(0*\xoff,-3*\yoff) node{Figure 1: $\Omega=\overline{\Omega}_1\cup\Omega_2.$} ++(2*\xoff,1.7*\yoff)  node{$\Gamma$};

\fill (0,1.5) circle [radius = 1pt];
\draw ++(0*\xoff,1.5*\yoff) node{p};

\draw[<-] (3,0)--(3.25,0) node{$\delta$};
\draw[->] (3.3,0)--(3.5,0);
\draw[->] (0,1.5)--(0,1.8);
\draw ++(-0.2*\xoff,2*\yoff) node{\textbf{n}};
\end{tikzpicture}\label{fig}
\end{center}

To be more specific, we introduce our mathematical model as follows: let the body $\Omega_1$ be surrounded by the coating $\Omega_2$ with uniform thickness $\delta > 0$; let the domain $\Omega=\overline{\Omega}_1\cup\Omega_2\subset \mathbb{R}^3$ as shown in Figure \ref{fig}.  For any finite $T>0$,  consider the initial boundary value problem with the Dirichlet boundary condition
\begin{equation*} \label{PDE1}
    \left\{
             \begin{array}{llr}
             u_t-\nabla \cdot (A(x)\nabla u)=f(x,t), &\mbox{$(x,t)\in Q_T,$}\\
             u=0,     &\mbox{$(x,t)\in S_T,$}&  \tag{1.1}\\
             u=u_0, &\mbox{$(x,t)\in\Omega\times\{0\},$}   
             \end{array}
  \right.
\end{equation*}
where  $Q_T:=\Omega\times(0,T)$  and  $S_T:=\partial\Omega\times(0,T)$. Suppose that $u_0 \in L^2(\Omega), f \in L^2(Q_T)$, and $A(x)$ is the thermal conductivity given by
\begin{equation*}
    A(x)=\left\{
        \begin{aligned}
              &kI_{3\times3}, &x\in \Omega_1,\\
              &\left(a_{ij}(x)\right)_{3\times3}, & x\in\Omega_2,
        \end{aligned}
    \right.
\end{equation*}
where $k$ is a positive constant independent of $\delta> 0$, and the positive-definite matrix $ (a_{ij} (x))$ is anisotropic and ``optimally aligned'' in the coating $\Omega_2$, which means that any vector inside the coating normal to the interface is always an eigenvector of $A(x) -$ see (\ref{OAC}) below for the precise definition.

Moreover, we also consider the initial value problem with the Neumann boundary condition
\begin{equation*}\label{PDE2}
    \left\{
             \begin{array}{llr}
             u_t-\nabla \cdot (A(x)\nabla u)=f(x,t), &\mbox{$(x, t)\in Q_T,$}\\
             \frac{\partial u}{\partial \textbf{n}_A}=0,     &\mbox{$(x,t)\in S_T,$}& \\
             u=u_0, &\mbox{$(x,t)\in\Omega\times\{0\},$}   
             \end{array}\tag{1.2}
  \right.
 \end{equation*}
where $\textbf{n}_A$ is the co-normal vector $A(x)\textbf{n}$, with $\textbf{n}$ being the unit outer normal vector field on $\Gamma(=\partial\Omega_1)$. In this case, the Neumann boundary condition is the same as $\frac{\partial u}{\partial \textbf{n}}=0$ since the coating is ``optimally aligned'' $-$ see below. 

\smallskip
We introduce the curvilinear coordinates $(p, r)$ by defining mapping
\begin{equation*}\label{curvilinear}
\begin{split}
F:  \quad & \Gamma \times (0,  \delta)  \longrightarrow   \mathbb{R}^3, \\
	 & F(p ,r) = p + r \textbf{n}(p) \in \Omega_2,
\end{split}
\end{equation*}
where $p$ is the projection of $x$ on $\Gamma$; $\textbf{n}(p)$ is the unit normal vector of $\Gamma$ pointing out of $\Omega_1$ at $p$; $r$ is the distance from $x$ to $\Gamma$. In the three-dimensional case, since the thermal tensor $A(x)$ is positive-definite, it has three orthogonal eigenvectors and corresponding eigenvalues. Every eigenvalue measures the thermal conductivity of the coating in the corresponding direction. By saying the coating $\Omega_2$ is optimally aligned, we mean that 
\begin{equation*}\label{OAC}
     A(x)\textbf{n}(p)=\sigma \textbf{n}(p), \quad \forall x\in \Omega_2, \tag{1.3}
\end{equation*}
where $\Omega_2$ is thin enough and $\Gamma$ is smooth enough such that the projection $p$ of $x$ onto $\Gamma$ is unique. 
This concept was first introduced by Rosencrans and Wang \cite{RW} in $2006$.

Because of the optimally aligned coatings, $A(x)$ must have two eigenvectors in the tangent directions. If  $ A(x)$ has two identical eigenvalues in the tangent directions, then within the coating $\Omega_2$, we assume that the thermal tensor $A(x)$ satisfies 
\begin{equation*}\label{case1}
\textit{Type I }  \text{ condition}: \quad  A(x)\textbf{s}(p)=\mu \textbf{s}(p), \quad \forall x\in \Omega_2, \tag{1.4}
\end{equation*}
where $\textbf{s}(p)$ is an arbitrary unit tangent vector of $\Gamma$ at $p$; $\sigma$ and $\mu$ are called the normal conductivity and the tangent conductivity, respectively.

If $A(x)$ has two different eigenvalues $\mu_1$ and $\mu_2$ in the tangent directions, then two tangent directions are fixed on $\Gamma$. According to the Hairy Ball Theorem in algebraic topology, there is no nonvanishing continuous tangent vector field on even-dimensional $n-$spheres. Therefore, in this paper, we consider $\Gamma$ to be a topological torus that is any topological space homeomorphic to a torus. Within the coating $\Omega_2$, we assume that the thermal tensor $A(x)$ satisfies 
\begin{equation*}\label{case2}
\textit{Type II } \text{ condition:} \quad
    A(x)\pmb{\tau}_1(p)=\mu_1 \pmb{\tau}_1(p), \quad 
    A(x)\pmb{\tau}_2(p)=\mu_2 \pmb{\tau}_2(p), \tag{1.5}
\end{equation*}
where $\pmb{\tau}_1(p)$ and $\pmb{\tau}_2(p)$ are two orthonormal eigenvectors of $A(x)$ in the tangent plane of $\Gamma$ at $p$; $\mu_1$ and $\mu_2$ are two different tangent conductivities in the corresponding tangent directions.

Throughout this article, $\Omega_1$ is fixed and bounded with $C^2$ smooth boundary $\Gamma$; the coating $\Omega_2$ is uniformly thick with $\partial\Omega$ approaching $\Gamma$ as $\delta \to 0$; $(\sigma, \mu, \mu_1, \mu_2) = \left(\sigma(\delta), \mu(\delta), \mu_1(\delta), \mu_2(\delta) \right)$ are positive functions of $\delta$. 

\smallskip
There have been rich, deep, and interesting results about the idea of using EBCs in the literature. It can date back to the classic book of Carslaw and Jaeger \cite{HJ}, where EBCs were first recorded. Subsequently, Sanchez-Palencia \cite{SP} first investigated the ``interior reinforcement problem" for the elliptic and parabolic equations in a particular case when the reinforcing material is lens-shaped. Following this line of thought, Brezis, Caffarelli, and Friedman \cite{BCF} rigorously studied the elliptic problem for both interior and boundary reinforcement. See Li and Zhang \cite{L, LZ} for further development. For the case of a rapid oscillating thickness of the coating, see \cite{BK}. Later on, lots of follow-up works of EBCs for general coatings and ``optimally aligned coatings" emerged (see \cite{ LZ, LW2017, LW, CPW, LZRW2009, LWZZ, GENG, LSWW2021, LW2022}). Furthermore, there is also a review paper \cite{XW} that provides a thorough investigation of this topic.

\smallskip
The layout of this paper is as follows. Section \ref{sec: estimates} is devoted to establishing some basic energy estimates and a compactness argument, showing that $u$ converges to some $v$ after passing to a subsequence of $\{u\}_{\delta>0}$ as $\delta \to 0$. In Section \ref{sec: case1}, we derive effective boundary conditions on $\Gamma \times (0, T)$ for the case of \textit{Type I} condition, in which two auxiliary functions are developed via harmonic extensions. In Section \ref{sec: case2}, based on two different harmonic extensions, we address effective boundary conditions on $\Gamma \times (0, T)$  for the case of \textit{Type II} condition.

\section{Weak solutions}\label{sec: estimates}
In this section we begin with some \textit{a priori} estimates, by which a compact argument is established to study the asymptotic behavior of the weak solution of (\ref{PDE1}) or (\ref{PDE2}).
\subsection{Preliminaries}
Before going into energy estimates, we first introduce some important Sobolev spaces: let $W^{1,0}_2(Q_T)$ be the subspace of functions belonging to $L^2(Q_T)$ with first order weak derivatives in $x$ also being in $L^2(Q_T)$; $W^{1,1}_2(Q_T)$ is defined similarly with  the first order weak derivative in $t$ belonging to $L^2(Q_T)$; $W^{1,0}_{2,0}(Q_T)$ is the closure in $W^{1,0}_2(Q_T)$ of $C^\infty$ functions vanishing near $\overline {S}_T$, and $W^{1,1}_{2,0}(Q_T)$ is defined similarly. Furthermore, denote
$V^{1,0}_{2,0}(Q_T):=W^{1,0}_{2,0}(Q_T)\cap C  \left([0,T];L^2(\Omega)\right).$

Let us define one more Sobolev space on $Q^1_T= \Omega_1\times(0,  T): V^{1,0}_{2}(Q^1_T)=W^{1,0}_{2}(Q^1_T)\cap C  \left([0,T];L^2(\Omega_1)\right).$
We endow all these spaces with natural norms.

For simplicity, we write $\int_{Q_T} u(x, t) dxdt$ instead of $\int_0^T\int_{\Omega} u(x, t) dxdt$.
\begin{definition}\label{def}
A function $u$ is said to be a weak solution of the Dirichlet problem (\ref{PDE1}), if $u\in V^{1,0}_{2,0}(Q_T)$ and for any $\xi \in C^\infty (\overline{Q}_T) $ satisfying $\xi = 0$ at $t=T$ and also near $S_T$, it holds that
 \begin{equation*}
    \mathcal{A}[u,\xi] := -\int_{\Omega} u_0 \xi(x,0)dx + \int_{Q_T} \left( A(x)\nabla u \cdot \nabla\xi - u \xi_t-f\xi \right)dxdt=0.  \tag{2.1}
 \end{equation*}
\end{definition}
The weak solution of the Neumann problem (\ref{PDE2}) is defined in the same way, except that $u\in  V^{1,0}_{2}(Q_T)$, and $\xi \in C^\infty (\overline{Q}_T ) $ satisfies $\xi = 0$ at $t=T$. Moreover, for any small $\delta > 0$, (\ref{PDE1}) or (\ref{PDE2})  admits a unique weak solution $u \in W^{1,0}_{2}(Q_T)\cap C  \left([0, T];L^2(\Omega)\right)$. As is well known, $u$ satisfies the following ``transmission conditions" in the weak sense
\begin{equation*}\label{trans}
  u_1=u_2, \quad k\nabla u_1 \cdot \textbf{n} = \sigma \nabla u_2 \cdot \textbf{n} \quad \text{ on }\Gamma,  \tag{2.2}
\end{equation*}
where $u_1$ and $u_2$ are the restrictions of $u$ on $\Omega_1\times (0, T)$ and $\Omega_2\times (0, T)$, respectively.

\subsection{Basic energy estimates}
In the sequel, for notational convenience, let $C(T)$ represent a generic positive constant depending only on $T$; let $O(1)$ represent a quantity that varies from line to line but is independent of $\delta$. We provide the following energy estimates for the weak solution of (\ref{PDE1}) or (\ref{PDE2}). 
\begin{lemma}\label{Estimate1}
Suppose $f \in L^2(Q_T)$ and $u_0 \in L^2(\Omega)$. Then, any weak solution $u$ of  (\ref{PDE1}) or (\ref{PDE2}) satisfies the following inequalities.
\begin{equation*}
 \begin{split}
     \text{(i)}&\max_{t\in[0,T]}\int_{\Omega}u^2(x,t)dx+\int_{Q_T}\nabla u \cdot A(x)\nabla u dxdt \leq C(T) \left(\int_{\Omega}u_0^2dx +\int_{Q_T}f^2dxdt \right),\\
     \text{(ii)}& \max_{t\in[0,T]} t\int_{\Omega}\nabla u \cdot A(x)\nabla udx+\int_{Q_T}t u_t^2dxdt\leq C(T)\left(\int_{\Omega}u_0^2dx +\int_{Q_T}f^2dxdt\right).
 \end{split}
\end{equation*}
\end{lemma}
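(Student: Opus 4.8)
The plan is to derive both inequalities via the standard energy method, testing the weak formulation against carefully chosen multipliers. Since the weak solution has only $L^2$ time-regularity a priori, I would work with the Steklov averages $u_h$ (or a Galerkin approximation) to justify the formal computations, and pass to the limit at the end; I will not belabor this point in the plan but it is what makes the manipulations legitimate. The key structural facts I will use are that $A(x)$ is symmetric and uniformly positive-definite on each piece (with constants that may depend on $\delta$, but the coercivity is still genuine for fixed $\delta$, which is all that is needed for an estimate whose right-hand side does not involve $\delta$), and that $A(x)\nabla u\cdot\nabla u\ge 0$ pointwise.

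For part (i): I would take $\xi = u$ (suitably truncated in time to $(0,t)$, using the Steklov average to make this admissible) as a test function. This yields the identity
\begin{equation*}
\frac{1}{2}\int_\Omega u^2(x,t)\,dx + \int_0^t\!\!\int_\Omega \nabla u\cdot A(x)\nabla u\,dx\,ds
= \frac{1}{2}\int_\Omega u_0^2\,dx + \int_0^t\!\!\int_\Omega f u\,dx\,ds.
\end{equation*}
Bounding the source term by Cauchy--Schwarz and Young's inequality, $\int fu \le \frac{1}{2}\int f^2 + \frac{1}{2}\int u^2$, absorbing nothing (the $u^2$ term stays on the right), and then applying Gronwall's inequality in $t$ to the function $t\mapsto \int_\Omega u^2(x,t)\,dx$, I obtain $\max_{t\in[0,T]}\int_\Omega u^2 \le C(T)(\int u_0^2 + \int_{Q_T} f^2)$. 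Feeding this bound back into the energy identity controls $\int_{Q_T}\nabla u\cdot A\nabla u$ as well, giving (i). The Neumann case is identical since the boundary term $\int_{\partial\Omega}\frac{\partial u}{\partial\mathbf{n}_A}u$ vanishes.

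For part (ii): here I would test with $\xi = t\,u_t$ (again via Steklov averages). The term $\int_{Q_T} t\,u_t^2$ appears directly. For the diffusion term, $\int_0^T\!\!\int_\Omega t\, A\nabla u\cdot\nabla u_t\,dx\,ds = \frac{1}{2}\int_0^T\!\!\int_\Omega t\,\frac{d}{dt}(A\nabla u\cdot\nabla u)\,dx\,ds$ — using symmetry of $A$ and that $A$ is time-independent — which integrates by parts in $t$ to $\frac{T}{2}\int_\Omega A\nabla u(x,T)\cdot\nabla u(x,T)\,dx - \frac{1}{2}\int_{Q_T} A\nabla u\cdot\nabla u\,dx\,ds$; the last integral is already controlled by part (i). The source term $\int_{Q_T} t\,f u_t$ is handled by Young's inequality, $t f u_t \le \frac{1}{2}t u_t^2 + \frac{1}{2}t f^2$, the first piece being absorbed into the left side and the second bounded by $\frac{T}{2}\int_{Q_T} f^2$. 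Rearranging gives $\max_{t}t\int_\Omega A\nabla u\cdot\nabla u + \int_{Q_T}t u_t^2 \le C(T)(\int u_0^2 + \int_{Q_T}f^2)$, i.e. (ii); to get the $\max$ over $t$ rather than just the value at $T$, I would run the same computation on $(0,t)$ for each $t\le T$.

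The main obstacle is the justification of testing with $u_t$: the weak solution is only known to lie in $V^{1,0}_{2,0}(Q_T)$, so $u_t$ need not be in $L^2$ a priori — indeed inequality (ii) is precisely what establishes the weighted $L^2$ bound on $u_t$. The clean way around this is to prove the estimates first for the Galerkin approximations $u^N$ (or for solutions with smooth data), where all the above manipulations are rigorous, obtain bounds uniform in $N$, and then pass to the limit using weak lower semicontinuity of the norms; the weight $t$ is what absorbs the lack of compatibility of $u_0$ at $t=0$. I would remark that the constants $C(T)$ here may depend on $\delta$ through the ellipticity of $A$, but since the right-hand sides are $\delta$-independent and the left-hand sides only involve the $A$-weighted Dirichlet form, the estimates are exactly in the $\delta$-robust form needed for the compactness argument in the next subsection.
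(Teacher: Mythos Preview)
Your proposal is correct and follows essentially the same approach as the paper: multiply by $u$ for (i) and by $t\,u_t$ for (ii), integrate by parts, and justify rigorously via Galerkin approximation (the paper mentions only Galerkin, but your Steklov-average alternative is equally valid). One small clarification on your closing remark: in fact $C(T)$ does \emph{not} depend on $\delta$ at any step, since the argument never invokes a lower ellipticity bound for $A$---only nonnegativity of $A\nabla u\cdot\nabla u$---which is precisely why the paper states that $C(T)$ depends only on $T$.
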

\begin{proof}
$(i)$ and $(ii)$ can be proved formally by a standard method. Multiplying (\ref{PDE1}) or (\ref{PDE2}) by $u$ and $t u_t$ respectively, we perform the integration by parts in both $x$ and $t$ over $\Omega \times (0, T)$. By the same analysis on the Galerkin approximation of $u$, this formal argument can be made rigorous. Hence, we omit the details.
\end{proof}
We prove our results using only $H^1$ a priori estimates, and higher order estimates are not needed for Theorem \ref{thm} and \ref{thm2} here. We refer interested readers to \cite[Theorem 5]{CPW} for more general higher order estimates for (\ref{PDE1}) or (\ref{PDE2}). 

For even general coefficients $A = A( x,t) =  \left(a_{ij}(x,t) \right)_{N \times N}$, let $a_{ij}(x,t)$ satisfy 
\begin{equation*}
\underset{i,j}{\sum} a_{ij}(x,t) \xi_i \xi_j  \geq  \lambda_0 |\xi|^2,
\end{equation*}
for any $\xi \in \mathbb{R}^N$ and some constant $\lambda_0>0$.  We also address the regularity results of $u$ near the interface $\Gamma$ without rigorous proof.
\begin{theorem}
Let $m$ be an integer with $m \geq 2$ and $ a \in (0, 1)$. Suppose that $\Gamma \in C^{m+a}$, $f \in C^{m-2+a, (m-2+a)/2}\left( \overline{\Omega}_h\times[0, T] \right)(h=1,2)$, and $a_{ij}\in C^{m-1+a,(m-1+a)/2}(\overline{\Omega}_h\times[0,T])$, then for any $t_0>0$, the weak solution $u$ of (\ref{PDE1}) or (\ref{PDE2}) satisfies
\begin{equation*}
    u\in C^{m+a,(m+a)/2}(\overline{\mathcal{N}}_h\times [t_0,T]),
\end{equation*}
where $\mathcal{N}$ is a narrow neighborhood of $\Gamma$ and $\mathcal{N}_h=\mathcal{N}\cap \Omega_h.$
\end{theorem}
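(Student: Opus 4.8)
The plan is to treat this as a \emph{linear parabolic diffraction (transmission) problem} and establish local Schauder estimates across the flat interface, then bootstrap. First I would localize and flatten: fix $t_0>0$ and a point $p\in\Gamma$, and choose a small parabolic cylinder centered at $(p,t)$, $t\in[t_0,T]$, small enough that the spatial ball does not meet $\partial\Omega$ (possible since the coating has fixed positive thickness $\delta$). Since $\Gamma\in C^{m+a}$, apply a $C^{m+a}$ diffeomorphism straightening $\Gamma$ to $\{y_3=0\}$, with the $\Omega_1$-side going to $\{y_3<0\}$ and the $\Omega_2$-side to $\{y_3>0\}$. In the new variables $u$ solves a uniformly parabolic equation $u_t=\nabla_y\cdot(\tilde A(y,t)\nabla_y u)+\text{(lower order)}$ on each half-cylinder, where $\tilde A\in C^{m-1+a,(m-1+a)/2}$ (one derivative is lost to the change of variables, consistent with the hypothesis $a_{ij}\in C^{m-1+a,(m-1+a)/2}$), and the transmission conditions (\ref{trans}) become continuity of $u$ together with continuity of the conormal flux $\tilde A\nabla_y u\cdot e_3$ across $\{y_3=0\}$. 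Multiplying by a cutoff supported in a smaller cylinder and equal to $1$ on a still smaller one, one reduces to a model problem on two half-spaces whose right-hand side is as regular as the currently known regularity of $u$ permits.

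Next I would prove the \textbf{model diffraction estimate with frozen coefficients}: for the two half-spaces $\{y_3<0\}$, $\{y_3>0\}$ carrying constant positive-definite matrices $A^-$, $A^+$ and the transmission conditions above, establish the a priori Schauder bound of the form $\|w\|_{C^{m+a,(m+a)/2}}\le C\big(\|f\|_{C^{m-2+a,(m-2+a)/2}}+\|w\|_{L^2}\big)$. This is classical — it follows from the explicit parabolic Poisson kernel together with single- and double-layer heat potentials on the interface hyperplane, as in the Ladyzhenskaya--Rivkind--Uraltseva theory of diffraction problems and Ladyzhenskaya--Solonnikov--Uraltseva; alternatively one may Fourier--Laplace transform in $(y_1,y_2,t)$, solve the resulting second-order ODE in $y_3$ on each half-line, match data and flux at $y_3=0$, and control the symbol. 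Freezing $\tilde A$ at $(p,t_0)$ and absorbing the variation $\tilde A(y,t)-\tilde A(p,t_0)$ plus the lower-order terms as a small perturbation on a sufficiently small cylinder (using uniform continuity of $\tilde A$) transfers this estimate to the variable-coefficient operator on a neighborhood of $(p,t)$.

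Finally I would \textbf{bootstrap}. Starting from \lemref{Estimate1}, we have $u\in V^{1,0}_2$, so $\nabla u\in L^2(Q_T)$; local $L^2$--$L^p$ parabolic theory for diffraction problems then upgrades $u$ to $C^{a',a'/2}$ near $\Gamma$ for some $a'>0$ and $t$ bounded away from $0$ (here is where $t_0>0$ is used: no compatibility conditions at $t=0$ are needed, and parabolicity smooths in time for $t>0$). Now iterate the estimate of the previous step: if $u\in C^{\ell+a,(\ell+a)/2}$ near $\Gamma$, then because $f\in C^{m-2+a,(m-2+a)/2}$ and $\tilde A\in C^{m-1+a,(m-1+a)/2}$, the data and coefficients of the localized equation are regular enough to conclude $u\in C^{\ell+1+a,(\ell+1+a)/2}$, until $C^{m+a,(m+a)/2}$ is reached after finitely many steps (shrinking the time interval from $[t_0,T]$ to $[t_0/2,T]$, etc., at each step is harmless). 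Covering $\Gamma\times[t_0,T]$ by finitely many such cylinders, by compactness of $\Gamma$ and of $[t_0,T]$, gives the conclusion on $\overline{\mathcal N}_h\times[t_0,T]$ for a sufficiently narrow neighborhood $\mathcal N$; cf.\ also \cite[Theorem 5]{CPW} for the analogous higher-order estimates.

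The main obstacle is the model diffraction Schauder estimate: unlike an ordinary one-sided boundary value problem, the flux-matching condition genuinely couples the solutions in the two half-spaces, so the estimate must be built from potential theory on the interface (or a careful symbol analysis), and it is the only non-routine ingredient. The optimally aligned structure (\ref{OAC}) makes this slightly cleaner — in normal/tangential coordinates $\tilde A$ keeps $e_3$ as an eigenvector, so the conormal flux is simply $\sigma\,\partial_{y_3}u$ and the transmission condition decouples from the tangential directions — but it is not essential; the general uniformly parabolic diffraction theory suffices. All remaining ingredients (flattening, cutoffs, freezing coefficients, the absorption/perturbation argument, the bootstrap, and the covering) are standard.
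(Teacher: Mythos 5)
The paper does not actually prove this theorem---it states the regularity ``without rigorous proof'' and simply defers to \cite{CPW} (cf.\ Theorem 5 there)---and your outline is exactly the classical parabolic diffraction-problem argument (localize and flatten $\Gamma$, model transmission Schauder estimate on two half-spaces via layer potentials or Fourier--Laplace analysis, coefficient freezing, bootstrap from the energy/De Giorgi--Nash level, then a covering argument) that underlies that reference, with the piecewise interpretation of the $C^{m+a,(m+a)/2}$ norms on $\overline{\mathcal N}_h$ correctly respected. So your proposal is correct and takes essentially the same route as the paper's (cited) proof.
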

\begin{proof}
The  proof of the theorem can be found in \cite{CPW} by using the idea of Nirenberg in \cite{LV}.
\end{proof}
\subsection{A compactness argument} 
We next turn to the compactness of the family of functions $\{u\}_{\delta>0}.$
\begin{theorem} \label{cpt}
Suppose that $\Gamma \in C^2$, $u_0\in L^2(\Omega)$ and $f\in L^2(Q_T)$ with all functions remaining unchanged as $\delta\to 0.$  Then, after passing to a subsequence of $\delta \to 0$, the weak solution $u$ of (\ref{PDE1}) or (\ref{PDE2}) converges to some $v$ weakly in $W^{1,0}_{2}(\Omega_1 \times(0,T))$, strongly in $C\left([0,T]; L^2(\Omega_1) \right)$. 
\end{theorem}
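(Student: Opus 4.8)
The plan is to extract the convergent subsequence purely from the energy estimates of Lemma~2.2 restricted to the fixed inner domain $\Omega_1$, where the coefficient matrix is simply $kI_{3\times 3}$ and hence $\delta$-independent. First I would note that, by Lemma~2.2(i), the family $\{u\}_{\delta>0}$ is bounded in $L^\infty\!\left(0,T;L^2(\Omega)\right)$ and, since $A(x)\ge kI$ on $\Omega_1$, the gradients $\{\nabla u\}_{\delta>0}$ are bounded in $L^2(\Omega_1\times(0,T))$. Therefore $\{u\}_{\delta>0}$ is bounded in $W^{1,0}_2(\Omega_1\times(0,T))$, and by weak compactness of bounded sets in this Hilbert space there is a subsequence (not relabeled) and a limit $v\in W^{1,0}_2(\Omega_1\times(0,T))$ with $u\rightharpoonup v$ and $\nabla u\rightharpoonup\nabla v$ weakly in $L^2$.

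The strong convergence in $C\left([0,T];L^2(\Omega_1)\right)$ is the substantive point, and I would obtain it via an Aubin--Lions--Simon argument applied on $\Omega_1$. The spatial regularity is already in hand: $u(\cdot,t)$ lies in a bounded subset of $H^1(\Omega_1)$ for a.e. $t$. For the time regularity I would use the weak formulation (Definition~2.1): testing the equation restricted to $\Omega_1$ against $\varphi\in H^1_0(\Omega_1)$ (or $H^1(\Omega_1)$ in the Neumann case) and using the transmission condition (2.2) to control the boundary term $\int_\Gamma k\,\nabla u_1\cdot\mathbf{n}\,\varphi = \int_\Gamma \sigma\,\nabla u_2\cdot\mathbf{n}\,\varphi$, one sees that $u_t$ is bounded in $L^2\!\left(0,T;(H^1(\Omega_1))^*\right)$ — or more safely in $L^2\!\left(0,T;H^{-s}(\Omega_1)\right)$ for suitable $s$ — uniformly in $\delta$, provided the flux through $\Gamma$ does not blow up. Since $H^1(\Omega_1)\hookrightarrow L^2(\Omega_1)\hookrightarrow (H^1(\Omega_1))^*$ with the first embedding compact, Aubin--Lions gives strong convergence $u\to v$ in $L^2\!\left(0,T;L^2(\Omega_1)\right)$; upgrading to $C\left([0,T];L^2(\Omega_1)\right)$ then follows from the uniform modulus-of-continuity-in-time estimate coming from Lemma~2.2(ii) (which bounds $\sqrt{t}\,u_t$ in $L^2(Q_T)$) together with an equicontinuity argument, or alternatively by observing the limit $v$ already lies in $C\left([0,T];L^2(\Omega_1)\right)$ and invoking a standard interpolation/Arzelà--Ascoli step.

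The main obstacle is controlling the time derivative $u_t$ uniformly in $\delta$ as a functional on $\Omega_1$: the natural test functions see the interface $\Gamma$, and the co-normal flux across $\Gamma$ is $\sigma\,\partial_\mathbf{n} u_2$, where $\sigma=\sigma(\delta)$ may degenerate or blow up. One must argue that this flux, as an element of a negative Sobolev space on $\Omega_1$, stays bounded — this is where the energy bound $\int_{Q_T}\nabla u\cdot A(x)\nabla u\,dx\,dt\le C(T)$ is used, since it controls $\sigma\int_{\Omega_2}|\partial_\mathbf{n} u_2|^2$ and hence, via a trace-type inequality in the thin layer, the relevant boundary integral against smooth test functions, uniformly in $\delta$. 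Once this uniform bound is secured the rest is the routine Aubin--Lions machinery; I would present the flux bound carefully and treat the Dirichlet and Neumann cases in parallel, noting that in the Neumann case one tests with general $H^1(\Omega_1)$ functions while in the Dirichlet case $H^1_0(\Omega_1)$ suffices.
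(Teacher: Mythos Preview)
Your flux concern is a red herring, and the real difficulty lies elsewhere. The paper never estimates $u_t$ as a dual element by testing the equation restricted to $\Omega_1$; it simply uses the second energy inequality of Lemma~2.1(ii), which gives $\int_{Q_T} t\,u_t^2\,dx\,dt \le C(T)$ with $u_t$ an honest $L^2$ function on the \emph{whole} domain $\Omega$. Restricting to $\Omega_1$ and to $t\ge t_0>0$ yields $u_t\in L^2((t_0,T)\times\Omega_1)$ uniformly in $\delta$, hence equicontinuity of $t\mapsto u(\cdot,t)$ in $L^2(\Omega_1)$ on $[t_0,T]$; together with pointwise precompactness from $H^1(\Omega_1)\hookrightarrow\hookrightarrow L^2(\Omega_1)$, Arzel\`a--Ascoli gives $u\to v$ in $C([t_0,T];L^2(\Omega_1))$. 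No trace inequality in the thin layer, no control of $\sigma\,\partial_{\mathbf n}u_2|_\Gamma$ is needed (and indeed such a uniform trace bound is false in several of the scaling regimes of the paper). Even in your own framework, testing with $\varphi\in H^1_0(\Omega_1)$ kills the boundary term outright, so the whole ``main obstacle'' paragraph is unnecessary.

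The genuine gap in your proposal is the endpoint $t=0$. The estimate $\int t\,u_t^2\le C$ is $t$-weighted and gives no equicontinuity at $t=0$; likewise $t\int_{\Omega}\nabla u\cdot A\nabla u\le C$ does not give $u(\cdot,t)$ bounded in $H^1(\Omega_1)$ uniformly in $t$ near $0$, so your claim that ``$u(\cdot,t)$ lies in a bounded subset of $H^1(\Omega_1)$ for a.e.\ $t$'' is not uniform and cannot feed a $C([0,T];L^2)$ Aubin--Lions conclusion. The paper handles this by a separate argument: approximate $u_0$ in $L^2(\Omega)$ by $u_0^n\in C_0^\infty(\Omega_1)$, split $u=u_1+u_2$ with initial data $u_0-u_0^n$ and $u_0^n$ respectively, and use the unweighted energy estimate on $u_2$ (whose initial gradient is $\delta$-independent) to show $\|u(\cdot,t)-u_0\|_{L^2(\Omega_1)}\le 2/n + 2\|u_0\|_{L^2(\Omega_2)} + 2\sqrt{t_0}\,F(f,n)^{1/2}$ for $t\in[0,t_0]$, which is small uniformly in $\delta$. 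Your proposal does not contain this step, and the vague reference to ``upgrading'' via Lemma~2.1(ii) does not supply it.
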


\begin{proof}[Proof of the theorem \ref{cpt}]
The proof this theorem is similar to that of \cite[Proposition 3.1]{CPW}, and hence we omitted the details.
\end{proof}
\section{EBCs for \textit{Type I} condition }\label{sec: case1}
Throughout this section, we always have the assumption of  \textit{Type I} condition (\ref{case1}). Under this condition, we aim to derive EBCs on $\Gamma \times (0, T)$ as the thickness of the layer shrinks to zero. 
\begin{theorem}\label{thm}
Suppose that $A(x)$ is given in (\ref{PDE1}) or (\ref{PDE2}) and satisfies (\ref{case1}). Let $u_0 \in L^2(\Omega)$ and $f \in L^2(Q_T)$ with functions being independent of $\delta$. Assume further that $\sigma$ and $\mu$ satisfy the scaling relationships
\begin{equation*}
    \lim_{\delta\to 0}\sigma\mu=\gamma\in[0,\infty], \quad
\lim_{\delta\to 0}\frac{\sigma}{\delta}=\alpha\in[0,\infty], \quad
\lim_{\delta\to 0}\mu\delta=\beta\in[0,\infty].
\end{equation*}
Let $u$ be the weak solution of (\ref{PDE1}) or  (\ref{PDE2}), 
then as $\delta \to 0$, $u\to v$ weakly in $W_2^{1,0}(\Omega_1 \times (0,T))$, strongly in $C([0,T];L^2(\Omega_1))$, where $v$ is the weak solution of 
\begin{equation*} \label{EPDE}
    \left\{
             \begin{array}{ll}
             v_t- k\Delta v=f(x,t), & (x,t)\in \Omega_1\times(0,T),  \\  
             v = u_0, & (x, t) \in \Omega_1\times \{0\},
             \end{array}
\right. \tag{3.1}
\end{equation*}
subject to the effective boundary conditions on $\Gamma\times (0,T)$ listed in Table \ref{tb1}.
\end{theorem}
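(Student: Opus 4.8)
The starting point is Theorem~\ref{cpt}: after passing to a subsequence $\delta\to0$, $u\to v$ weakly in $W^{1,0}_2(\Omega_1\times(0,T))$ and strongly in $C([0,T];L^2(\Omega_1))$ with $v(\cdot,0)=u_0$. What remains is to identify $v$, and the first, easy half of that is the interior equation: inserting into the weak formulation of Definition~\ref{def} a test function $\xi$ supported in $\Omega_1\times[0,T)$ kills the coating integral, and the weak $W^{1,0}_2(\Omega_1\times(0,T))$ convergence passes to the limit in $\int k\nabla u\cdot\nabla\xi$ and $\int u\xi_t$ while $\int f\xi$ and $\int u_0\,\xi(x,0)$ are fixed; hence $v_t-k\Delta v=f$ in $\Omega_1$ in the weak sense, which is (\ref{EPDE}). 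All the real content is the boundary condition on $\Gamma\times(0,T)$, and the plan is to extract it by testing with functions that do not vanish near $\Gamma$ and controlling the coating term $\int_{(\Omega_2)_T}\nabla u\cdot A(x)\nabla\xi$ in the limit; the lower-order pieces $\int_{(\Omega_2)_T}u\xi_t$ and $\int_{(\Omega_2)_T}f\xi$ are harmless because $|\Omega_2|\to0$ while $u$ and $f$ stay bounded in $L^2$.

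To handle the coating I would first fix boundary-fitted coordinates in the collar, $x=p+r\mathbf n(p)$ with $p\in\Gamma$ and $r\in[0,\delta]$, so that $dx=(1+O(\delta))\,dr\,dS_p$ and, because of optimal alignment (\ref{OAC})--(\ref{case1}), the quadratic form diagonalises: $\nabla u\cdot A(x)\nabla u=\sigma|u_r|^2+\mu|\nabla_\Gamma u|_g^2$ up to factors $1+O(\delta)$. Lemma~\ref{Estimate1}(i) then furnishes the one coating estimate I will rely on, namely $\int_0^T\!\!\int_\Gamma\!\!\int_0^\delta\big(\sigma|u_r|^2+\mu|\nabla_\Gamma u|^2\big)\,dr\,dS_p\,dt\le C$, to be used alongside the outer boundary condition on $\partial\Omega$ and the transmission conditions (\ref{trans}) on $\Gamma$.

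The mechanism for the boundary condition is the \emph{auxiliary function}. Given smooth boundary data $\eta$ on $\overline{\Omega}_1\times[0,T)$, I extend it into the collar as $\xi(x,t)=\eta(p,t)\,\Psi_\delta(r)$ in the simplest regimes, with $\Psi_\delta$ linear (or the solution of the rescaled normal ODE) satisfying $\Psi_\delta(0)=1$ and $\Psi_\delta(\delta)=0$ in the Dirichlet case, $\Psi_\delta'(\delta)=0$ in the Neumann case; and, in the regimes where the rescaled normal and tangential scales $\sigma/\delta^2$ and $\mu$ are comparable, as the harmonic extension solving $\sigma\,\partial_r^2\xi+\mu\,\Delta_\Gamma\xi=0$ in $\Gamma\times(0,\delta)$, which is precisely what generates the Dirichlet-to-Neumann operator and, after rescaling, the fractional Laplacian. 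Substituting such $\xi$ into $\mathcal{A}[u,\xi]=0$, integrating by parts in $\Omega_2$, cancelling the flux across $\Gamma$ against the $\Omega_1$-side via (\ref{trans}), and using the equation satisfied by $\Psi_\delta$, one collapses all coating contributions onto integrals over $\Gamma\times(0,T)$ with coefficients explicit in $\sigma,\mu,\delta$; letting $\delta\to0$ and inserting $\sigma\mu\to\gamma$, $\sigma/\delta\to\alpha$, $\mu\delta\to\beta$ produces, in the respective parameter ranges, the Robin term $\alpha v$, the surface-diffusion term $-\beta\Delta_\Gamma v$, the Dirichlet condition $v=0$ when $\alpha=\infty$, the Neumann condition when $\alpha=0$, and the nonlocal $\gamma$-term, which matched against the weak form of (\ref{EPDE}) is exactly Table~\ref{tb1}. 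Here one also uses that the traces of $u$ on $\Gamma$ converge to the trace of $v$, which follows from the strong $C([0,T];L^2(\Omega_1))$ convergence together with the coating energy bound.

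The main obstacle is the one flagged in the introduction: in three dimensions the rescaled coating operator $\tfrac{\sigma}{\delta^2}\partial_r^2+\mu\Delta_\Gamma$ may degenerate as $\delta\to0$, one coefficient disappearing relative to the other, so $\Psi_\delta$ solves a singularly perturbed, or genuinely degenerate, elliptic problem on the surface $\Gamma$; I must both construct it with the correct $\delta$-dependence of its normal flux on $\Gamma$ and prove the corrector estimate that $u$ minus its $\Psi_\delta$-extension is small, as $\delta\to0$, in the coating energy norm weighted by $\sigma$ and $\mu$. This is where essentially all the work sits, and it is what is genuinely new compared with the two-dimensional analysis of \cite{CPW}. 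Once the boundary condition is identified, each limiting problem, (\ref{EPDE}) together with its boundary condition from Table~\ref{tb1}, has a unique weak solution by standard parabolic theory, the nonlocal boundary operators being monotone, so $v$ does not depend on the subsequence and the stated convergence holds for the whole family $\{u\}_{\delta>0}$.
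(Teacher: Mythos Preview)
Your overall architecture is correct and matches the paper: compactness via Theorem~\ref{cpt}, extension of test functions into the collar by the harmonic auxiliary function solving $\sigma\partial_r^2\xi+\mu\Delta_\Gamma\xi=0$ with Dirichlet or Neumann data at $r=\delta$, passage to the limit in the weak formulation using the energy bound of Lemma~\ref{Estimate1}, a case analysis on the scaling parameters, and then uniqueness of the limit problem to remove the subsequence. The paper does exactly this, splitting the coating integral into a main term $I$ (computed by integrating by parts against the auxiliary function's equation, yielding a $\Gamma$-integral of $u\cdot\Theta_R(s,0)$) and curvature-error terms $II+III=O(\delta)(\sigma\mu)^{1/4}\|\Theta_R\|^{1/2}$.

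Two points need correction. First, there is no ``corrector estimate that $u$ minus its $\Psi_\delta$-extension is small in the coating energy norm'': the paper never approximates $u$ in the coating. The entire coating contribution $\int_{\Omega_2}\nabla\theta\cdot A\nabla u$ is evaluated \emph{exactly} as a $\Gamma$-integral via integration by parts (this is (3.9)), because $\theta$ satisfies the auxiliary PDE; the only use of the energy bound on $u$ is to kill the curvature remainders $II$, $III$. You should drop the corrector language and instead record the identity $\int_0^\delta\!\int_\Gamma(\sigma\theta_r u_r+\mu\nabla_\Gamma\theta\cdot\nabla_\Gamma u)=-\int_\Gamma\sigma\theta_r(s,0)u$ as the central computation.

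Second, you misplace the three-dimensional novelty. For the \textit{Type I} condition (\ref{case1}) treated in this theorem, the tangential operator is the full $\Delta_\Gamma$ and the auxiliary problem $\Theta_{RR}+\Delta_\Gamma\Theta=0$ is uniformly elliptic on $\Gamma\times(0,h)$; the analysis is essentially the two-dimensional one of \cite{CPW}, with the surface Laplacian replacing $\partial_{ss}$. The degenerate rescaled operator you describe, where one tangential coefficient vanishes relative to the other, arises only under the \textit{Type II} condition (\ref{case2}) with $\mu_2/\mu_1\to0$, and is the subject of Theorem~\ref{thm2}, not this one. So for the present theorem the ``main obstacle'' you flag does not occur, and the case analysis proceeds exactly as in \cite{CPW} once the curvilinear-coordinate errors $II$, $III$ are controlled.
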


\begin{table}[!htbp]
    \centering
    \caption{Effective boundary conditions on $\Gamma \times (0, T)$.}\label{tb1}
    
    EBCs on $\Gamma\times (0,T)$ for (\ref{PDE1}).\\
    
    \medskip
    
    \begin{tabular}{l|llc}
        
         As $\delta\to 0$   
         &  \quad $\frac{\sigma}{\delta}\to 0$ 
         & $\frac{\sigma}{\delta}\to\alpha\in(0,\infty)$ 
         &  \quad $\frac{\sigma}{\delta}\to\infty$\\
        \hline
        \hline
         $\sigma\mu\to 0$  
         & \quad $\frac{\partial v}{\partial \textbf{n}}=0$	
         & $k\frac{\partial v}{\partial \textbf{n}}=-\alpha v$	
         &  $v=0$\\
        \hline
         $\sqrt{\sigma\mu}\to\gamma\in (0,\infty)$	
         & \ $k\frac{\partial v}{\partial \textbf{n}}=\gamma \mathcal{J}_D^\infty [v]$ 
         &  $k\frac{\partial v}{\partial \textbf{n}}=\gamma \mathcal{J}_D^{\gamma/\alpha} [v]$	
         & $v=0$\\
        \hline
         $\sigma\mu\to \infty$
         & \makecell{ $\nabla_{\Gamma} v =0$,\\ $\int_{\Gamma}\frac{\partial v}{\partial\textbf{n}}=0 $ 
        }	
        & \makecell{$\nabla_{\Gamma} v =0$,\\ $\int_{\Gamma}(k\frac{\partial v}{\partial\textbf{n}}+\alpha v) =0 $
        }	
        &  $v=0 $  \\
        \hline
    \end{tabular}

    \bigskip
    
    EBCs on $\Gamma\times (0,T)$ for (\ref{PDE2}).\\
    
    \medskip
   
    \begin{tabular}{l|llc}
    
      As $\delta\to 0$ 
      &  $\mu\delta\to 0$ 
      & \quad $\mu\delta\to\beta\in(0,\infty)$ 
      &  \quad $\mu\delta\to \infty$\\
      \hline
      \hline
     $\sigma\mu\to 0$ 	
      &  $\frac{\partial v}{\partial \textbf{n}}=0$	
      &   \qquad $\frac{\partial v}{\partial \textbf{n}}=0$	
      & \quad $\frac{\partial v}{\partial \textbf{n}}=0$\\
        \hline
       $\sqrt{\sigma\mu}\to\gamma\in (0,\infty)$	
       &   $\frac{\partial v}{\partial \textbf{n}}=0$	
       &  \quad $k\frac{\partial v}{\partial \textbf{n}}=\gamma \mathcal{J}_N^{\beta/\gamma} [v]$	
       &  \quad $k\frac{\partial v}{\partial \textbf{n}}=\gamma \mathcal{J}_N^\infty [v]$\\
       \hline
        $\sigma\mu\to \infty$	
       &  $\frac{\partial v}{\partial \textbf{n}}=0$	
       & \quad $k\frac{\partial v}{\partial\textbf{n}}= \beta\Delta_{\Gamma}v$ 
       & \quad \makecell{$\nabla_{\Gamma} v =0$,\\$\int_{\Gamma}\frac{\partial v}{\partial \textbf{n}}=0 $}\\
        \hline
    \end{tabular}
\end{table}   
We now focus on the boundary conditions arising in Table \ref{tb1}. The boundary condition $\nabla_\Gamma v =0$ on $\Gamma \times (0, T)$ indicates that $v$ is a constant in the spatial variable (but it may depend on $t$), where $\nabla_\Gamma$ is the surface gradient on $\Gamma$. The operator $\Delta_\Gamma $ is the Laplacian-Beltrami operator defined on $\Gamma$, and the boundary condition
$k\frac{\partial v}{\partial \textbf{n}}=\beta \Delta_\Gamma v $ can be understood as a second-order partial differential equation on $\Gamma$, revealing that the thermal flux across $\Gamma$ in the outer normal direction causes heat accumulation that diffuses with the diffusion rate $\beta$.

$\mathcal{J}_D^{H}$ and $\mathcal{J}_N^{H}$, as shown in Table \ref{tb1}, are linear and symmetric operators mapping the Dirichlet value to the Neumann value. More precisely, for $ H \in(0, \infty)$, and smooth $g$ defined on $\Gamma$, we define
\begin{equation*}
    \mathcal{J}_D^H[g](s) := \Theta_R(s, 0)  \quad \text{ and } \quad \mathcal{J}_N^H[g](s):=\Pi_R(s, 0),
\end{equation*}
where $\Theta$ and $\Pi$ are, respectively, the bounded solutions of 
 \begin{equation*}
    \left\{
             \begin{array}{ll}
                   \Theta_{RR}+\Delta_\Gamma      \Theta=0 , & \Gamma\times(0,H),\\
                  \Theta (s, 0) = g(s), &      \Theta(s, H)=0,
             \end{array}
  \right.
\quad
 \left\{
             \begin{array}{ll}
                \Pi_{RR}+\Delta_\Gamma   \Pi=0 , & \Gamma\times(0,H),\\
               \Pi (s, 0)=g(s), &   \Pi_R(s, H)=0.
             \end{array}
  \right.
 \end{equation*}
The analytic formulas for $\mathcal{J}_D^H[g]$ and $\mathcal{J}_N^H[g]$ are given and deferred to Subsection \ref{sec3.2}. We then define
\begin{equation*}
    \left(\mathcal{J}_D^\infty[g], \mathcal{J}_N^\infty[g]\right) :=\underset{H \to \infty}{\lim}\left(\mathcal{J}_D^H[g], \mathcal{J}_N^H[g]\right),
\end{equation*}
where $\mathcal{J}_D^\infty[g] = \mathcal{J}_N^\infty[g] = - \left(-\Delta_\Gamma\right)^{1/2} g$ is the fractional Laplacian-Beltrami defined on $g$. 

\smallskip
We remark that the effective boundary conditions listed in Table \ref{tb1} are the same as those in \cite[Theorem 1]{CPW} where they call the operators $\mathcal{J}_D^\infty$ and $\mathcal{J}_N^\infty$ as the derivative of Hilbert transform rather than the fractional Laplacian. Notably,  our effective boundary conditions do not concern the time derivative, and we refer the interested reader to \cite{CR1990, GLR2022, LSWW2021} for the derivation of dynamic boundary conditions.

\subsection{Definition, existence and uniqueness of weak solutions of effective models}
We define weak solutions of (\ref{EPDE}) together with the boundary conditions in Table \ref{tb1}.
\begin{definition}\label{def1}
Let the test function $\xi\in C^\infty (\overline{Q^1_T}) $ satisfy $\xi=0$ at $t=T$.

$(1)$ A function $v$ is said to be a weak solution of (\ref{EPDE}) with the Dirichlet boundary condition $v=0$ if $v\in V^{1,0}_{2,0}(Q_T^1)$, and for any test function $\xi$, $v$ satisfies
\begin{equation*}\label{wksol1}
\begin{split}
    \mathcal{L}[v,\xi] :=& -\int_{\Omega_1}u_0(x)\xi(x,0)dx+\int_0^T\int_{\Omega_1} \left(k\nabla v \cdot \nabla\xi-v\xi_t-f\xi \right)dxdt
    =0.
\end{split}\tag{3.2}
\end{equation*}

$(2)$ A function $v$ is said to be a weak solution of (\ref{EPDE}) with the boundary conditions $\nabla_{\Gamma} v=0$ and
$\int_{\Gamma}(k\frac{\partial v}{\partial \textbf{n}}+\alpha v)=0$ for $\alpha \in [0, \infty)$ if for almost everywhere fixed $t\in (0, T)$, the trace of $v$ on $\Gamma$ is a constant, and if $\nabla_{\Gamma}\xi=0$ on $\Gamma$, it holds that $v\in V^{1,0}_2(Q_T^1)$ and $v$ satisfies
$$\mathcal{L}[v,\xi]=-\int_0^T\int_{\Gamma}\alpha v \xi dsdt.
$$

$(3)$ A function $v$ is said to be a weak solution of (\ref{EPDE}) with the boundary condition $k\frac{\partial v}{\partial \textbf{n}}=\mathcal{B}[v]$, where $\mathcal{B}[v]=-\alpha v$, or $\gamma \mathcal{J}_D^H[v]$, or $\gamma \mathcal{J}_N^H[v]$ for $H \in (0, \infty]$, if $v\in V^{1,0}_2(Q_T^1)$ and if for any test function $\xi$, $v$ satisfies
$$\mathcal{L}[v,\xi]=\int_0^T\int_{\Gamma} v \mathcal{B}[\xi] dsdt.
$$

$(4)$ A function $v$ is said to be a weak solution of (\ref{EPDE}) with the boundary condition $k\frac{\partial v}{\partial \textbf{n}}=\beta \Delta_{\Gamma} v$, if $v\in V^{1,0}_2(Q_T^1)$ with its trace belonging to $L^2\left((0,T); H^1(\Gamma)\right)$, and if for any test function $\xi$, $v$ satisfies
$$
\mathcal{L}[v,\xi]=-\beta\int_0^T\int_{\Gamma} \nabla_{\Gamma}v\nabla_{\Gamma}\xi dsdt.
$$

\end{definition}
A weak solution of (\ref{EPDE}) satisfies the initial value in the sense that $v(\cdot,t)\to u_0(\cdot)$ in $L^2(\Omega_1)$ as $t\to 0.$ Moreover, the existence and uniqueness of the weak solution of (\ref{EPDE}) with the boundary conditions in Tables \ref{tb1} are stated without proof in the following theorem. 
\begin{theorem}\label{Uni}
Suppose that $\Gamma \in C^1, u_0\in L^2(\Omega_1)$ and $f\in L^2(Q^1_T)$. Then, (\ref{EPDE}) with any boundary condition in Tables \ref{tb1} has one and only one weak solution as defined in Definition \ref{def1}.
\end{theorem}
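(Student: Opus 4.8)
The plan is to establish existence and uniqueness separately, treating the various boundary conditions through a unified functional-analytic framework rather than case-by-case PDE arguments. First I would identify, for each boundary condition in Table~\ref{tb1}, the appropriate Hilbert space $\mathcal{H}$ in which to seek the solution: for the Dirichlet case this is $V^{1,0}_{2,0}(Q^1_T)$ built on $H^1_0(\Omega_1)$; for the Robin-type and nonlocal cases ($\mathcal{B}[v]=-\alpha v$, $\gamma\mathcal{J}_D^H[v]$, $\gamma\mathcal{J}_N^H[v]$) it is $V^{1,0}_2(Q^1_T)$ with the natural trace bilinear form on $\Gamma$ added to the energy; for the Laplace--Beltrami case it is the subspace of $V^{1,0}_2(Q^1_T)$ whose traces lie in $L^2((0,T);H^1(\Gamma))$; and for the ``$\nabla_\Gamma v=0$'' cases it is the closed subspace of functions whose trace on $\Gamma$ is spatially constant for a.e.\ $t$. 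In each instance one checks that the boundary bilinear form is bounded, symmetric, and nonnegative: for $-\alpha v$ this is immediate; for $\Delta_\Gamma$ it is $\beta\int_\Gamma|\nabla_\Gamma v|^2$, manifestly nonnegative; and for $\mathcal{J}_D^H,\mathcal{J}_N^H$ one uses the variational characterization of the harmonic extension, namely that $-\int_\Gamma g\,\mathcal{J}_D^H[g]=\int_{\Gamma\times(0,H)}|\nabla\Theta|^2\geq 0$ (with the analogous identity for $\mathcal{J}_N$), so that $\mathcal{B}$ defines a nonnegative symmetric operator; the limiting case $H\to\infty$ gives $-(-\Delta_\Gamma)^{1/2}$, again nonnegative and symmetric by spectral calculus on $\Gamma$.

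Once the bilinear forms are in place, I would prove existence by the Galerkin method. Fix an orthonormal basis of $L^2(\Omega_1)$ consisting of eigenfunctions of the relevant elliptic operator (the Laplacian with the corresponding homogeneous boundary condition, or, in the ``constant trace'' cases, with the constraint built in), project the equation onto the first $N$ modes, solve the resulting linear ODE system, and derive the energy estimate analogous to Lemma~\ref{Estimate1}: multiplying by the solution and integrating by parts produces $\tfrac{1}{2}\tfrac{d}{dt}\|v^N\|_{L^2(\Omega_1)}^2 + k\|\nabla v^N\|_{L^2(\Omega_1)}^2 + (\text{nonnegative boundary term}) = \int_{\Omega_1}fv^N$, and Gronwall gives uniform bounds in $L^\infty((0,T);L^2(\Omega_1))\cap L^2((0,T);H^1(\Omega_1))$ plus a bound on the boundary energy. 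Passing to a weak limit and using density of the test functions, the limit $v$ satisfies the weak formulation of Definition~\ref{def1}; the continuity $v\in C([0,T];L^2(\Omega_1))$ and attainment of the initial datum follow from a standard Lions--Magenes argument (the $t$-derivative lies in a dual space, so $v$ has a continuous $L^2$ representative). For the nonlocal operators one must additionally verify that the boundary term passes to the limit, which follows from the trace theorem $H^1(\Omega_1)\hookrightarrow H^{1/2}(\Gamma)$ together with continuity of $\mathcal{J}_D^H,\mathcal{J}_N^H$ as maps $H^{1/2}(\Gamma)\to H^{-1/2}(\Gamma)$, uniformly down to $H=\infty$ where the operator is exactly $-(-\Delta_\Gamma)^{1/2}$.

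Uniqueness is the easier half: given two weak solutions with the same data, the difference $w$ satisfies $\mathcal{L}[w,\xi]=\pm(\text{boundary form})[w,\xi]$ with zero initial data; testing (after a Steklov time-averaging / mollification to make $w$ an admissible test function against itself, since $w_t$ need not be in $L^2$) with $\xi=w$ on $(0,t)$ yields $\tfrac{1}{2}\|w(\cdot,t)\|_{L^2(\Omega_1)}^2 + k\int_0^t\|\nabla w\|_{L^2(\Omega_1)}^2 + \int_0^t(\text{nonnegative boundary energy}) = 0$, forcing $w\equiv 0$. The same computation handles every row of both tables once the sign of the boundary term is confirmed nonnegative, which is exactly the point established in the first paragraph.

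The main obstacle I anticipate is the rigorous handling of the nonlocal operators $\mathcal{J}_D^H$, $\mathcal{J}_N^H$ and their $H\to\infty$ limit within the Galerkin/energy framework: one needs the mapping properties and the precise positivity/symmetry of these Dirichlet-to-Neumann-type operators on the curved surface $\Gamma$, ideally via simultaneous diagonalization in the eigenbasis of $-\Delta_\Gamma$ (where $\mathcal{J}_D^H$ acts as multiplication by $-\sqrt{\lambda}\coth(\sqrt{\lambda}H)$ and $\mathcal{J}_N^H$ by $-\sqrt{\lambda}\tanh(\sqrt{\lambda}H)$ on the $\lambda$-eigenspace, with $\lambda=0$ handled separately), so that all the required bounds and limits become elementary estimates on these multipliers. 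A secondary technical point is the ``constant trace'' function spaces in the $\sigma\mu\to\infty$ rows, where the test-function class is constrained and one must check the constrained Galerkin basis still yields a well-posed problem; but this reduces to solving a heat equation on $\Omega_1$ coupled to a single scalar ODE in $t$ for the boundary value, which is routine.
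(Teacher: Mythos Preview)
Your proposal is correct and, in fact, considerably more detailed than what the paper itself provides: the paper's proof of Theorem~\ref{Uni} consists solely of a pointer to \cite{CPW}, \cite{LM1972}, and \cite{W1987}, with no argument given. The Galerkin-plus-energy scheme you outline is precisely the standard machinery found in those references (Lions--Magenes and Wloka for the abstract parabolic framework, \cite{CPW} for the specific handling of the nonlocal boundary operators $\mathcal{J}_D^H$, $\mathcal{J}_N^H$), so there is no substantive difference in approach---you have simply written out what the paper leaves to citation. Your identification of the sign/symmetry of the boundary forms via the harmonic-extension energy identity matches the paper's own equations (3.8) and (3.15), and your spectral-multiplier description of $\mathcal{J}_D^H$ and $\mathcal{J}_N^H$ agrees with (3.15) as well, so the ``main obstacle'' you flag is already resolved by material elsewhere in the paper.
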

\begin{proof}
For a rigorous proof of the theorem, the reader is referred to \cite{CPW} (see also \cite{LM1972} and \cite{W1987}).  
\end{proof}

Recall $\Gamma \in C^2$, and it is well-known (\cite[Lemma 14.16]{GT}) that for a small $\delta>0$, $ F$ is a $C^1$ smooth diffeomorphism from $\Gamma \times (0, \delta)$ to $\Omega_2$; $r = r(x)$ is a $C^2$ smooth function of $x$ and is seen as the inverse of the mapping $x = F(p, r)$. 
By using local coordinates $s=(s_1, s_2)$ in a typical chart on $\Gamma$, we then have
\begin{equation*}\label{cur}
   p = p(s)=p(s_1,s_2),  \quad x = F(p(s), r) = F(s, r), \quad dx=(1+2Hr+\kappa r^2)dsdr  \quad \text{ in} \quad \overline{\Omega}_2,\tag{3.3}
\end{equation*}
where $ds$ represents the surface element; $H(s) $and $\kappa(s) $ are the mean curvature and Gaussian curvature at  $p$ on $\Gamma$, respectively. 

In the curvilinear coordinates $(s, r)$, the Riemannian metric tensor at $x \in \overline{\Omega}_2$ induced from $ \mathbb{R}^3$ is defined as $G( s,r)$ with elements 
 $$g_{ij}(s, r)=g_{ji}(s, r) = \langle F_{i}, F_{j} \rangle_{\mathbb{R}^3}, \quad i,j = 1,2,3,
 $$
where $F_i = F_{s_i}$ for $i =1,2$ and $F_3  = F_{r}$. Let $| G | :=  \det G$ and $g^{ij}(s,r)$ be the element of the inverse matrix of $G$, denoted by $G^{-1}$. 

In the curvilinear coordinates $(s, r)$, the derivatives of $u$ are given as follows
\begin{equation*}\label{derivative}
  \begin{split}
    \nabla u&=u_r \textbf{n} +\nabla_s u, \\
     \nabla_s u  = \sum_{i,j=1,2} g^{ij}(s,r) u_{s_j} & F_{s_i}(s,r) \quad \text{and} \quad \nabla_{\Gamma}u=\sum_{i,j=1,2} g^{ij}(s,0)u_{s_j}p_{s_i}(s),
\end{split} \tag{3.4}
\end{equation*}
\begin{equation*}
  \begin{split}
       \nabla \cdot \left(A(x)\nabla u \right) &= \frac{\sigma}{\sqrt{|G|}}\left(\sqrt{|G|} u_r\right)_r+\mu\Delta_{s}u, \\
       \Delta_{s}u=\nabla_s \cdot \nabla_s u &=\frac{1}{\sqrt{|G|}}\sum_{ij=1,2}\left(\sqrt{|G|}g^{ij}(s, r)u_{s_i}\right)_{s_j}. 
  \end{split}\tag{3.5}
\end{equation*}
Moreover, if $A(x)$ satisfies \textit{Type I} condition (\ref{case1}), then in $\overline{\Omega}_2$, we have  
\begin{equation*}\label{eq36}
    A(x) = \sigma \textbf{n}(p)\otimes\textbf{n}(p) + \mu \sum_{ij}g^{ij}(s,r)F_{s_i}(s,r)\otimes F_{s_j}(s,r). \tag{3.6}
\end{equation*}

\subsection{Auxiliary functions }\label{sec3.2}
Our goal for this subsection is to construct two auxiliary functions and estimate their asymptotic behaviors when the thickness of the thin layer is sufficiently small. Our idea of developing these auxiliary functions is adapted from \cite{CPW} via a harmonic extension.  
 
We construct two auxiliary functions for \textit{Type I} condition (\ref{case1}) by defining $\theta$ and $\pi$. For every $t \in [0, T]$, let $\theta(s, r, t)$ and $\pi(s, r, t)$ be bounded solutions of 
\begin{equation*}\label{AF1}
    \left\{
             \begin{array}{ll}
             \sigma      \theta_{rr}+\mu \Delta_\Gamma      \theta=0 , & \Gamma \times (0, \delta),\\
             \theta (s, 0, t) = g(s), &      \theta(s, \delta, t)=0,
             \end{array}
  \right.
\quad
    \left\{
             \begin{array}{ll}
             \sigma   \pi_{rr}+\mu \Delta_\Gamma   \pi=0, & \Gamma \times (0, \delta),\\
               \pi(s, 0, t) = g(s), &  \pi_r(s, \delta, t)=0,
             \end{array}
  \right. \tag{3.7}
\end{equation*}
where $g(s):=g(p(s))=\xi(s,0, t)$. From the maximum principle, $\theta$ and $\pi$ are unique. 

\smallskip

Multiplying (\ref{AF1}) by $\theta$ and $\pi$ respectively, and implementing integration by parts over $\Gamma \times (0, \delta)$, we arrive at
\begin{equation*}\label{eq38}
    \begin{split}
      \int_0^\delta\int_{\Gamma} \left(\sigma     \theta_r^2+\mu |\nabla_\Gamma \theta|^2 \right) &= -\int_{\Gamma} \sigma  \theta_r(s,0,t)g(s),  \int_0^\delta \int_{\Gamma} \left(\sigma     \pi_r^2+\mu |\nabla_\Gamma \pi|^2 \right) = -\int_{\Gamma} \sigma \pi_r(s,0,t)g(s).
      \end{split}\tag{3.8}
\end{equation*} 
Multiplying (\ref{AF1}) by $u$ respectively and performing the integration by parts again, we get
\begin{equation*}\label{eqno39}
    \begin{split}
     \int_0^\delta \int_{\Gamma} \left(\sigma     \theta_r u_r+\mu \nabla_\Gamma \theta \cdot \nabla_\Gamma u \right)  &= -\int_{\Gamma} \sigma \theta_r(s,0,t) u(p(s), t),\\
     \int_0^\delta \int_{\Gamma} \left(\sigma     \pi_r u_r + \mu \nabla_\Gamma \pi \cdot \nabla_\Gamma u  \right)  &= -\int_{\Gamma} \sigma  \pi_r(s,0,t)u(p(s), t).
      \end{split}\tag{3.9}
\end{equation*} 

To eliminate $\sigma$ and $\mu$, we assert $r = R\sqrt{\sigma/\mu}$ and plug $r$ into (\ref{AF1}). Suppressing the time dependence, this leads to
$$\Theta(s, R)= \theta(s, R\sqrt{\sigma/\mu}, t), \quad \Pi(s, R) = \pi(s, R\sqrt{\sigma/\mu}, t).
$$ 
Consequently, (\ref{AF1}) is equivalent to
 \begin{equation*}\label{rescale1}
    \left\{
             \begin{array}{ll}
                   \Theta_{RR}+\Delta_\Gamma \Theta = 0, & \Gamma \times (0, h),\\
                  \Theta (s, 0) = g(s), & \Theta(s, h)=0,
             \end{array}
  \right.
\quad
  \left\{
             \begin{array}{ll}
                \Pi_{RR}+\Delta_\Gamma  \Pi = 0 , & \Gamma \times (0, h),\\
               \Pi(s, 0) = g(s), & \Pi_R(s, h)=0,
             \end{array}
  \right.\tag{3.10}
\end{equation*}
where $h:=\delta\sqrt{\frac{\mu}{\sigma}}=\frac{\mu\delta}{\sqrt{\sigma\mu}}=\frac{\sqrt{\sigma\mu}}{\sigma/\delta}.$ We now define two Dirichlet-to-Neumann operators
\begin{equation*}\label{D2N}
\mathcal{J}_D^h[g](s) := \Theta_R(s,0) \quad \text{ and } \quad  \mathcal{J}_N^h[g](s) := \Pi_R(s,0). \tag{3.11}
\end{equation*}
Observe 
\begin{equation*}\label{eq312}
\begin{split}
    \sigma \theta_r(s, 0, t) &= \sqrt{\sigma\mu}      \Theta_R(s,0) = \sqrt{\sigma\mu} \mathcal{J}_D^h[g](s), \quad
    \sigma \pi_r(s, 0, t) = \sqrt{\sigma\mu}   \Pi_R(s, 0)=\sqrt{\sigma\mu}\mathcal{J}_N^h[g](s). 
\end{split}\tag{3.12}
\end{equation*}
Rigorous formulas for $\mathcal{J}_D^h[g]$ and $\mathcal{J}_N^h[g]$ are given in eigenvalues and eigenfunctions of $- \Delta_\Gamma$ by using separation of variables, from which it follows that
\begin{equation*}\label{eq313}
	\begin{split}
             \Theta(s, R) & = \sum_{n=1}^\infty  \frac{-g_{n}e^{-\sqrt{\lambda_n} h}}{2 \sinh(\sqrt{\lambda_n} h)} \left(e^{\sqrt{\lambda_n}R}-e^{\sqrt{\lambda_n}(2h-R)}\right)e_n(s),
    \end{split}\tag{3.13}
\end{equation*}
\begin{equation*}\label{eq314}
	\begin{split}
             \Pi(s, R) &=\sum_{n=1}^\infty  \frac{g_{n}e^{-\sqrt{\lambda_n} h}}{2 \cosh(\sqrt{\lambda_n} h)} \left(e^{\sqrt{\lambda_n}R} + e^{\sqrt{\lambda_n}(2h-R)}\right)e_n(s),
       \end{split}\tag{3.14}
\end{equation*}
where $ g_{n} := \langle e_n, g \rangle = \int_{\Gamma} e_n g ds$; $\lambda_n$ and $e_n(s)$ are  the  eigenvalues and the corresponding eigenfunctions of the Laplacian-Beltrami $-\Delta_{\Gamma}$ defined on $\Gamma$.

\smallskip
Subsequently, it follows from (\ref{D2N}) and  (\ref{eq313}) that
\begin{equation*}\label{eq315}
\begin{split}
  \mathcal{J}_D^h[g](s) = &-\sum_{n=1}^\infty  \frac{\sqrt{\lambda_n} e_n(s)g_n}{\tanh(\sqrt{\lambda_n}h)}, \quad
  \mathcal{J}_N^h[g](s)  = -\sum_{n=1}^\infty \sqrt{\lambda_n} e_n(s)g_n \tanh(\sqrt{\lambda_n}h). 
\end{split}\tag{3.15}
\end{equation*}
Furthermore, if $h\to H \in (0, \infty ]$, we have
\begin{equation*}\label{eq316}
          \begin{split}
           \left|\mathcal{J}_D^h[g](s)-\mathcal{J}_D^H[g](s)\right|&=\sum_{n=1}^\infty \sqrt{\lambda_n} e_n(s)g_n \left(\frac{1}{\tanh(\sqrt{\lambda_n}H)} -\frac{1}{\tanh(\sqrt{\lambda_n}h)} \right) \\
           &=|H-h|\sum_{n=1}^\infty \lambda_n e_n(s)g_n\frac{-4}{(e^{\sqrt{\lambda_n}h^\prime}-e^{-\sqrt{\lambda_n}h^\prime})^2}\\
           &= O(|H-h|),
          \end{split} \tag{3.16}
\end{equation*}
for some $h^\prime$ between $h$ and $H$. This implies the uniform convergence in $h$. By using a similar analysis as above, if $h\to H\in(0,\infty]$, $\mathcal{J}_N^h[g]$ converges uniformly to $\mathcal{J}_N^H[g]$ where $\mathcal{J}_D^\infty[g] = \mathcal{J}_N^\infty[g]  := -(-\Delta_\Gamma)^{1/2}g.$

\smallskip
In the follow-up, we are going to estimate the size of the term $\Theta_R(s, 0)$ and $\Pi_R(s, 0)$ for a sufficiently small $\delta$.  On one hand, if $h$ is small and $h \to 0$ as $\delta \to 0$, then it follows from (\ref{eq315}) that 
\begin{equation*}\label{eqno416}
\begin{split}
    \left|\Theta_R(s, 0)+\frac{g(s)}{h}\right|& \leq  h \|g\|_{C^2(\Gamma)}, \quad  \left|\Pi_{R}(s, 0) - h\Delta_\Gamma g\right| \leq O(h^3).
\end{split}\tag{3.17}
\end{equation*}
Combining this with (\ref{eq312}), we get
\begin{equation*}\label{eq318}
 \begin{split}
     \sqrt{\sigma \mu} \Theta_R(s,0) &= \frac{\sigma}{\delta}\left(-g(s)+O(h^2)\right), \quad
     \sqrt{\sigma\mu} \Pi_R(s,0) = \mu\delta \left(\Delta_\Gamma g(s) + O(h^2)\right).
 \end{split}\tag{3.18}
\end{equation*} 

On the other hand, if $h \to H \in (0, \infty]$ as $\delta\to 0$, then from the Taylor expansion for $\Theta(s, R)$, we obtain 
\begin{equation*}
         \Theta_R(s,0)=\frac{    \Theta(s, R)-     \Theta(s, 0)}{R}-\frac{R}{2}     \Theta_{RR}(s, \overline{R}), 
\end{equation*}
for some $\overline{R}\in [0,R]$. Taking $R=\min \{h,1\}$, from the maximum principle, we have
\begin{equation*}
\begin{split}
    \| \Theta_R(s, 0)\|_{L^\infty(\Gamma))}&\leq\frac{2}{R}\|     \Theta\|_{L^\infty(\Omega_2)}+R\|      \Theta_{RR}\|_{L^\infty(\Omega_2)}\leq \frac{3\|g\|_{C^2(\Gamma)}}{R},
\end{split}
\end{equation*}
from which it turns out that 
\begin{equation*}\label{eq319}
    \sqrt{\sigma\mu} \|\Theta_R\|_{L^\infty(\Gamma)}=\frac{O(1)\sqrt{\sigma\mu}}{R}. \tag{3.19}
\end{equation*}
By the similar analysis on $\Pi_R$, if $h \to H \in (0,\infty]$ as $\delta\to 0$, then we have
\begin{equation*}\label{eq320}
\begin{split}
    \|  \Pi_R \|_{L^\infty(\Gamma)}&= O(1).
\end{split}\tag{3.20}
\end{equation*}
 
We end this subsection by mentioning that for $H \in (0, \infty)$, $\mathcal{J}^H_D[g]$ and $\mathcal{J}^H_N[g]$ are defined for smooth $g$. However, it is easy to show that they are also well-defined for given any $g \in H^{\frac{1}{2}}(\Gamma)$ where 
 $H^{\frac{1}{2}}(\Gamma)$ is defined by the completion of smooth functions under the $H^{\frac{1}{2}}(\Gamma)$ norm. Moreover, $\mathcal{J}^H_D$ and  $\mathcal{J}^H_N: H^{\frac{1}{2}}(\Gamma) \to H^{-\frac{1}{2}}(\Gamma)$ are linear and symmetric, where $H^{-\frac{1}{2}}(\Gamma)$ is the dual space of $H^{\frac{1}{2}}(\Gamma)$.

\subsection{Proof of Theorem \ref{thm}}
The main result of this subsection is to prove Theorem \ref{thm}, in which we derive EBCs on $\Gamma \times (0, T)$.

\begin{proof}[Proof of Theorem \ref{thm}]
According to Theorem \ref{cpt}, the weak solution $u$ of (\ref{PDE1}) or (\ref{PDE2}) converges to some $v$ weakly in $W^{1,0}_{2} \left(\Omega_1\times (0, T) \right)$, and strongly in $C\left( [0, T]; L^2(\Omega_1) \right)$ after passing to a subsequence of $\delta > 0$. Thus, given any subsequence of $\delta$, we emphasize that we can ensure that $u \to v$ in all above spaces after passing to a further subsequence. In the further, we will show that $v$ is a weak solution of (\ref{EPDE}) with effective boundary conditions listed in Table \ref{tb1}. By what we have proved in Theorem \ref{Uni}, $v$ is unique. The fact that $u \to v$ without passing to any subsequence of $\delta >0$, is a consequence of the uniqueness.

\smallskip

To derive the EBCs on $\Gamma \times (0, T)$, we complete our proof in the following two steps: one is for the Dirichlet problem (\ref{PDE1}), and  the other is for the Neumann problem (\ref{PDE2}).

\medskip

\noindent \textbf{Step 1. Effective boundary conditions for the Dirichlet problem (\ref{PDE1}). }

To begin with the proof, we assume that all conditions in Theorem \ref{thm} hold. Let the test function $\xi \in C^{\infty}(\overline{\Omega}_1\times [0,T])$ with $\xi=0$ at $t=T$, and extend $\xi$ to the domain $\overline{\Omega}\times [0,T]$ by defining
\begin{equation*}
   \overline{\xi}(x, t)=\left\{
\begin{aligned}
&\xi(x, t), &x\in \overline{\Omega}_1, \\
&\theta(p(x), r(x), t), &x\in\Omega_2,
\end{aligned}
\right.
\end{equation*}
where $\theta $ is introduced in (\ref{AF1}). It is easy to check that $\overline{\xi}\in W^{1,1}_{2,0}(Q_T)$, and $\overline{\xi}$ is called the harmonic extension of $\xi$. 

\smallskip
Since $u$ is a weak solution of (\ref{PDE1}), it follows from Definition \ref{def} that
\begin{equation*}\label{eq51}
\begin{split}
    \mathcal{A}[u,\overline{\xi}]&=-\int_{\Omega}u_0(x)\overline{\xi}(x,0)dx+\int_{0}^{T}\int_{\Omega} \left(\nabla \overline{\xi}\cdot A \nabla u-u\overline{\xi}_t-f\overline{\xi} \right)dxdt=0.
\end{split}\tag{3.21}
\end{equation*}
Rewrite (\ref{eq51}) as 
\begin{equation*}\label{eq322}
\begin{split}
     \int_0^T\int_{\Omega_1} k\nabla\xi\cdot\nabla u dxdt-\int_\Omega u_0(x)\overline{\xi}(x,0)dx-\int_0^T\int_{\Omega} (u\overline{\xi_t}+f\overline{\xi}) dxdt
  =&-\int_0^T\int_{\Omega_2}\nabla \theta \cdot A\nabla u dxdt.
\end{split}\tag{3.22}
\end{equation*}
Since $u \to v $ weakly in $W^{1, 0}_2\left(\Omega_1 \times(0,T) \right)$, and strongly in $C\left([0, T]; L^2(\Omega_1)\right)$ as $\delta \to 0$, we summarize as
\begin{equation*}
\left\{
\begin{array}{ll}
     &  \int_{Q_T}u\xi_t dxdt \rightarrow \int_{Q^1_T}v\xi_t dxdt, \\
     & \int_{Q^1_T}\nabla u\cdot \nabla  \xi dxdt \to \int_{Q^1_T}\nabla v\cdot \nabla  \xi dxdt,\\
     &\int_{Q_T}f \overline{\xi} dxdt \to \int_{Q^1_T}f \xi dxdt,
\end{array}
\right. 
\end{equation*}
from which the left-hand side of (\ref{eq322}) is equivalent to
\begin{equation*}\label{eqno53}
    \begin{split}
      \mathcal{L}[v, \xi] :=&\int_0^T\int_{\Omega_1} k\nabla\xi\cdot\nabla v dxdt-\int_{\Omega_1}u_0(x)\xi(x,0)dx-\int_0^T\int_{\Omega_1} (v\xi_t+f\xi) dxdt. 
      \end{split} \tag{3.23}
\end{equation*}

The remainder of the following focuses on the right-hand side of (\ref{eq322}). Using the curvilinear coordinates $(s,r)$, by virtue of (\ref{cur}), (\ref{derivative}) and (\ref{eq36}), we have
\begin{equation*}\label{eq324}
\begin{split}
     RHS:=&-\int_0^T\int_{\Omega_2}\nabla \theta \cdot A\nabla u dxdt\\
     =&-\int_0^T\int_{\Gamma}\int_0^\delta \left(\sigma  \theta_r u_r+\mu\nabla_s \theta \nabla_s u \right) (1+2Hr+\kappa r^2) drdsdt \\
    =&-\int_0^T\int_{\Gamma}\int_0^\delta \left(\sigma \theta_r u_r+\mu\nabla_\Gamma \theta \nabla_\Gamma u \right) -\int_0^T\int_{\Gamma}\int_0^\delta (\sigma     \theta_r u_r+\mu\nabla_\Gamma \theta \nabla_\Gamma u)(2Hr+\kappa r^2)  \\
    & -\int_0^T\int_{\Gamma}\int_0^\delta \mu (\nabla_s \theta \nabla_s u-\nabla_\Gamma \theta \nabla_\Gamma u)(1+2Hr+\kappa r^2)\\
    =: & I+II+III.
\end{split} \tag{3.24}
\end{equation*}
Due to (\ref{eqno39}) and (\ref{eq312}), it holds that 
\begin{equation*}\label{eq325}
I := \int_0^T \mathcal{I} dt= \sqrt{\sigma\mu}\int_0^T\int_{\Gamma}u(p(s), t) \Theta_R(s, 0)dsdt. \tag{3.25}
\end{equation*}
Subsequently, in view of (\ref{eq38}) and (\ref{eq319}), it follows from  Lemma \ref{Estimate1} that
\begin{equation*}\label{eq56}
\begin{split}
    |II| \leq &\int_0^T \left|\int_{\Gamma}\int_0^\delta (\sigma     \theta_r u_r+\mu\nabla_\Gamma \theta \nabla_\Gamma u)(2Hr+\kappa r^2) drds\right|dt\\
    = & O(\delta) \int_0^T \left( \int_{\Gamma}\int_0^\delta \sigma \theta_r^2 +\mu |\nabla_\Gamma \theta |^2 \right)^{1/2}\left( \int_{\Omega_2} \sigma u_r^2+\mu |\nabla_\Gamma u|^2 \right)^{1/2}dt\\
    = & O(\delta) \int_0^T \frac{1}{\sqrt{t}}\left(\int_\Gamma \sigma |\theta_r(s,0,t)| \right)^{1/2}dt\\
    = & O(\delta)  \sqrt{T}(\sigma\mu)^{1/4} ||\Theta_R(s,0)||^{1/2}_{L^\infty(\Gamma)},
\end{split}\tag{3.26}
\end{equation*}
where we have used H\"oder inequality. Consequently, using (\ref{derivative}), (\ref{eq38}) and (\ref{eq319}), we have
\begin{equation*}\label{eq327}
\begin{split}
    |III|\leq & \left|\int_0^T\int_{\Gamma}\int_0^\delta \mu (\nabla_s     \theta \nabla_s u-\nabla_\Gamma \theta \nabla_\Gamma u)(1+2Hr+\kappa r^2) \right|\\
    = & O(\delta) \int_0^T\int_{\Gamma}\int_0^\delta \mu |\sum_{ij} \theta_{s_i} u_{s_j}|\\
    = & O(\delta) \int_0^T \left( \int_{\Gamma}\int_0^\delta \sigma \theta_r^2 +\mu |\nabla_\Gamma \theta |^2 \right)^{1/2}\left( \int_{\Omega_2} \sigma u_r^2+\mu |\nabla_\Gamma u|^2 \right)^{1/2}dt\\
    = & O(\delta)  \sqrt{T}(\sigma\mu)^{1/4} ||\Theta_R(s, 0)||^{1/2}_{L^\infty(\Gamma)}, 
\end{split}\tag{3.27}
\end{equation*}
where Lemma \ref{Estimate1} and H\"oder inequality were used.

To investigate the asymptotic behavior of the right-hand side of \eqref{eq322} as $\delta \to 0$, we consider the following cases
$$(1)  \frac{\sigma}{\delta}\to 0, \quad (2) \frac{\sigma}{\delta}\to \alpha\in (0,\infty), \quad (3) \frac{\sigma}{\delta}\to \infty.
$$

\noindent
\textbf{Case $1$}. $\frac{\sigma}{\delta}\to 0$ as $\delta \to 0$. 

\noindent 
Subcase $(1i)$. $\sigma\mu \to 0$ as $\delta\to 0$. By Lemma \ref{Estimate1} and the trace theorem, it follows from \eqref{eq318}, \eqref{eq319} and \eqref{eq325} that
\begin{equation*}
	\begin{split}
		|I| = & O(1)\sqrt{\sigma\mu} \left\| \Theta_R(s,0) \right\|_{L^\infty(\Gamma)} \int_0^T ||u||_{H^1(\Omega_1)} dt
		=  O(1) T \max \{ \frac{\sigma}{\delta}, \sqrt{\sigma \mu} \},
	\end{split}
\end{equation*}
where we have used H\"older inequality. In view of \eqref{eq318}, \eqref{eq319}, \eqref{eq56} and \eqref{eq327}, we have
$$
| II + III | = O(\delta)  \sqrt{T} \max \{ \sqrt{\frac{\sigma}{\delta}}, (\sigma \mu)^{1/4} \}.
$$
Then,
\begin{equation*}
	\left| RHS \right| \to 0 \text{~ as ~} \delta \to 0,
\end{equation*}
from which we have $\mathcal{L}[v,\xi]=0$. Thus $v$ satisfies the boundary condition $\frac{\partial v}{\partial \textbf{n}}=0$ on $\Gamma \times (0, T)$.

\medskip
\noindent 
Subcase $(1ii)$. $\sqrt{\sigma\mu}\to \gamma \in (0, \infty)$ as $\delta\to 0$. In this case, $h\to \infty$. By the weak convergence of $u$, as $\delta\to 0$, it holds from \eqref{eq315} and \eqref{eq325} that
\begin{equation*}
    \begin{split}
      \mathcal{I} & = \sqrt{\sigma\mu}\int_{\Gamma} u\Theta_R(s,0)  \longrightarrow \gamma \int_{\Gamma} v\mathcal{J}_D^\infty[g].
    \end{split}
\end{equation*}
Moreover, combining (\ref{eq319}), (\ref{eq320}), (\ref{eq56}) and (\ref{eq327}), we have $|II+III| \to 0$ as $\delta\to 0$. It turns out that
$$ \mathcal{L}[v, \xi] = \gamma \int_0^T \int_{\Gamma}v\mathcal{J}_D^\infty[\xi],$$ which means that $v$ satisfies 
    $k\frac{\partial v}{\partial \textbf{n}}=\gamma \mathcal{J}_D^\infty[v]$ 
on $\Gamma \times (0,T)$.

\medskip
\noindent 
Subcase $(1iii)$. $\sigma\mu \to \infty$. In this case, $h \to \infty$ as $\delta\to 0$. Divided both sides of (\ref{eq51}) by  $\sqrt{\sigma\mu}$ and sending $\delta\to 0$, we obtain
\begin{equation*}
\int_0^T\int_{\Gamma}v \mathcal{J}_D^\infty[g] = 0.
\end{equation*}
Because the range of $\mathcal{J}_D^\infty[\cdot]$ contains $\{e_n\}_{n=1}^\infty$ for almost everywhere $t\in (0, T)$, it turns out that $\nabla_\Gamma v =0$ on $\Gamma.$ We further choose a special test function $\xi$ such that $\xi(s,0, t) = m(t)$ for some smooth function $m(t)$. Then, we construct a linear extension by defining $\theta(s, r, t)= (1-\frac{r}{\delta}) m(t)$.
Consequently, a direct computation leads to
\begin{equation*}\label{eq58}
\begin{split}
    RHS=&-\int_0^T\int_{\Omega_2}\nabla     \theta \cdot A\nabla u dxdt
    =\int_0^T \frac{\sigma m(t)}{\delta}\left(\int_0^\delta \int_{\Gamma} u_r (1+2Hr+\kappa r^2) \right)dt\\
    =&\int_0^T \frac{\sigma m(t)}{\delta}\left(\int_{\Gamma}u \right) dt-\int_0^T \frac{\sigma m(t)}{\delta}\int_0^\delta \int_{\Gamma} u (2H+2\kappa r)\\
   \leq &  \frac{\sigma }{\delta}\int_0^T m(t)\left( O(1) + O(\sqrt{\delta})\|u(\cdot,t)\|_{L^2(\Omega_2)}\right) dt,
\end{split}\tag{3.28}
\end{equation*} 
from which we derive $\mathcal{L}[v,\xi]=0$ as $\delta \to 0$. Then, $v$ satisfies $\int_{\Gamma}\frac{\partial v}{\partial \textbf{n}}=0$ on $\Gamma \times (0,T)$.

\medskip
\noindent
 \textbf{Case $2$}. $\frac{\sigma}{\delta}\to \alpha \in (0, \infty)$ as $\delta\to 0$. 

\noindent 
Subcase $(2i)$. $\sigma\mu \to 0$ as $\delta\to 0$. In this case,  $h \to 0$. From (\ref{eq318}) and (\ref{eq324})- (\ref{eq327}), we have 
\begin{equation*}
    I \longrightarrow -\alpha\int_0^T\int_{\Gamma}v\xi \quad \text{ and } \quad II+III \longrightarrow 0 \text{ as } \delta\to0,
\end{equation*}
from which it follows that
\begin{equation*}
    \mathcal{L}[v,\xi]=-\alpha\int_0^T\int_{\Gamma}v\xi. \tag{3.29}
\end{equation*}
So, $v $ satisfies $k\frac{\partial v}{\partial \textbf{n}}=-\alpha v$ on $\Gamma\times (0,T)$.

\smallskip
\noindent Subcase $(2ii)$. $\sqrt{\sigma\mu}\to \gamma \in (0, \infty)$ as $\delta\to 0$.
In this case, $h \to H = \gamma/\alpha \in (0, \infty)$.  By virtue of (\ref{eq318}) and (\ref{eq324})- (\ref{eq327}), it holds that 
\begin{equation*}
    I \longrightarrow \gamma\int_0^T\int_{\Gamma}v \mathcal{J}_D^{\gamma/\alpha}[\xi] \quad \text{ and } \quad II+III \longrightarrow 0 \text{  as  } \delta \to 0, 
\end{equation*}
from which we get $\mathcal{L}[v,\xi]=\gamma\int_0^T\int_{\Gamma}v\mathcal{J}_D^{\gamma/\alpha}[\xi].$
So, $v$ satisfies  $ k\frac{\partial v}{\partial \textbf{n}}=\gamma \mathcal{J}^{\gamma/\alpha}[v]$ on $\Gamma\times (0,T)$.

\medskip
\noindent Subcase $(2iii)$. $\sigma\mu \to \infty$ as $\delta\to 0$. In this case, $h \to \infty$. Divided both sides of (\ref{eq322}) by  $\sqrt{\sigma\mu}$ and sending $\delta\to 0$, we obtain $\int_0^T\int_{\Gamma}v\mathcal{J}_D^\infty[\xi]=0$, resulting in $\nabla_\Gamma v =0$ on $\Gamma.$
Using the same test function and the auxiliary function in Subcase $(1iii)$, we obtain $ \mathcal{L}[v,\xi]=-\alpha\int_0^T\int_{\Gamma}v\xi$ and $\nabla_\Gamma v = 0$ on  $\Gamma$, which means $v$ satisfies  $\int_{\Gamma}\left(k\frac{\partial v}{\partial \textbf{n}}+\alpha v \right)=0$
on $\Gamma\times(0,T)$.

\medskip
\noindent
\textbf{Case $3$}.  $\frac{\sigma}{\delta}\to \infty$ as $\delta\to 0$.

\noindent 
Subcase $(3i)$. $\sqrt{\sigma\mu}\to \gamma \in [0, \infty)$ as $\delta\to 0$. In this case,  $h \to 0$. Divided both sides of (\ref{eq322}) by $\sigma / \delta$ and sending $\delta \to 0$, a combination of (\ref{eq38}) and (\ref{eq324})- (\ref{eq327}) leads to
\begin{equation*}
 \frac{\delta}{\sigma} I \longrightarrow -\int_0^T\int_{\Gamma}v\xi = 0, 
\end{equation*}
from which $v$ satisfies $v=0$ on $\Gamma\times(0,T).$ 

\smallskip
\noindent Subcase $(3ii)$. $\sigma\mu \to \infty$ as $\delta\to 0$. In this case, after passing to a subsequence, we have $h\to H \in[0, \infty]$. If $H=0$, then divided both sides of (\ref{eq322}) by $\sigma / \delta$ and sending $\delta\to 0$, it yields  $v=0$ on $\Gamma\times (0, T)$. 

If $H\in (0, \infty]$, then divided both sides of (\ref{eq322}) by $\sqrt{\sigma\mu}$ and sending $\delta \to 0$, we have
\begin{equation*}
 \frac{I}{\sqrt{\sigma\mu}}\longrightarrow \int_0^T\int_{\Gamma}v \mathcal{J}_D^H[\xi]=0.
\end{equation*}
Employing the method analogous to that in Subcase $(1iii)$, for almost everywhere $t\in (0, T)$, we have $\nabla_\Gamma v = 0 \text{ and }\int_0^T\int_{\Gamma}vm(t)=0,$ which implies $v=0$ on $\Gamma\times(0,T)$.

\medskip
\noindent 
\textbf{Step 2. Effective boundary conditions for the Neumann problem (\ref{PDE2}). }

Let $\xi\in C^{\infty}(\overline{\Omega}_1\times [0,T])$ with $\xi=0$ at $t=T$ and extend the test function $\xi$ to $\overline{\Omega}\times [0,T]$ by defining
\begin{equation*}
  \overline{\xi}(x,t)=\left\{
       \begin{aligned}
    &\xi(x,t),& x\in \overline{\Omega}_1, \\
  &\pi(p(x), r(x), t),& x\in\Omega_2,
\end{aligned}
\right.
\end{equation*}
where $\pi$ is introduced in (\ref{AF1}). It is easy to see that $\overline{\xi}\in W^{1,1}_{2,0}(Q_T)$.

Thanks to the weak convergence of $\{u\}_{\delta >0}$, as $\delta \to 0$, it follows from Definition \ref{def1} that
\begin{equation*}\label{eq330}
    \mathcal{L}[u,\overline{\xi}]
      \longrightarrow \mathcal{L}[v, \xi] =-\underset{\delta \to 0}{\lim}\int_0^T\int_{\Omega_2}\nabla \pi \cdot A\nabla u dx dt. \tag{3.30}
\end{equation*}
In the following, we focus on the right-hand side of (\ref{eq330}). By using the curvilinear coordinates $(s, r)$ in (\ref{cur}), it can be rewritten as 
\begin{equation*}\label{eq511}
\begin{split}
     RHS:=&-\int_0^T\int_{\Gamma}\int_0^\delta \left(\sigma  \pi_r u_r+\mu\nabla_s \pi \nabla_s u \right) (1+2Hr+\kappa r^2) \\
    &-\int_0^T\int_{\Gamma}\int_0^\pi \left(\sigma \pi_r u_r+\mu\nabla_\Gamma \pi \nabla_\Gamma u \right) -\int_0^T\int_{\Gamma}\int_0^\delta (\sigma \pi_r u_r+\mu\nabla_\Gamma \pi \nabla_\Gamma u)(2Hr+\kappa r^2) \\
    & -\int_0^T\int_{\Gamma}\int_0^\delta \mu (\nabla_s \pi \nabla_s u-\nabla_\Gamma \pi \nabla_\Gamma u)(1+2Hr+\kappa r^2)\\
    =:&I+II+III.
\end{split} \tag{3.31}
\end{equation*}
As noted, write down
\begin{equation*}\label{eq332}
\begin{split}
      \mathcal{I}&= -\int_0^\delta \int_{\Gamma} (\sigma \pi_ru_r+\mu\nabla_\Gamma  \pi \nabla_\Gamma u) =\sqrt{\sigma\mu}\int_{\Gamma}u(p(s), t)  \Pi_R(s, 0).
\end{split}\tag{3.32}
\end{equation*}
Using the same estimates as in (\ref{eq56}) and (\ref{eq327}), we get
\begin{equation*}\label{eq333}
\begin{split}
    |II+III|    \leq&  O(\delta) \int_0^T \frac{1}{\sqrt{t}} \left(\int_\Gamma \sigma |\pi_r(s,0,t)| \right)^{1/2}dt
    =  O(\delta)  \sqrt{T}(\sigma\mu)^{1/4} ||\Pi_R||^{1/2}_{L^\infty(\Gamma)}.
\end{split}\tag{3.33}
\end{equation*}
Next, we consider the following cases $(1) \sigma\mu \to 0$, $(2) \sqrt{\sigma\mu}\to \gamma\in (0,\infty)$, $(3) \sigma \mu \to \infty.$

\medskip
\noindent
\textbf{Case $1$}. $\sigma\mu \to 0$ as $\delta\to 0$.  By (\ref{eq38}), (\ref{eq318}) and (\ref{eq319}), we have 
\begin{equation*}
    \begin{split}
      RHS &\leq O(1)\int_0^T\left(\int_0^\delta \int_{\Gamma} \sigma \pi_r^2+\mu|\nabla_\Gamma  \pi|^2\right)^{1/2}\left(\int_{\Omega} \nabla u\cdot A \nabla u\right)^{1/2} dt \leq O(1)  \sqrt{T}(\sigma\mu)^{1/4},
    \end{split}
\end{equation*}
where H\"older inequality and Lemma \ref{Estimate1} were used. So, we have $\mathcal{L}[v,\xi]=0$, implying $v$ satisfies $\frac{\partial v}{\partial \textbf{n}}=0$ on $\Gamma\times(0,T)$.

\medskip
\noindent
\textbf{Case $2$}. $\sqrt{\sigma\mu}\to \gamma\in (0, \infty)$ as $\delta \to 0$.

\noindent 
Subcase $(2i)$. $\mu\delta\to 0$ as $\delta \to 0$. In this case, $h \to 0$. In terms of (\ref{eq318}), (\ref{eq319}), (\ref{eq332}) and (\ref{eq333}), we have $I \to 0 $ and $|II+III| \to 0$, from which we have  $\mathcal{L}[v,\xi]=0$. So,  $v$ satisfies  $\frac{\partial v}{\partial \textbf{n}}=0$ on $\Gamma\times(0,T)$.

\medskip
\noindent Subcase $(2ii)$. $\mu\delta\to \beta\in (0,\infty]$ as $\delta \to 0$. In this case, $h \to H = \beta/\gamma \in(0, \infty]$. As $\delta \to 0$, it follows from (\ref{eq319}) and (\ref{eq333}) that
$I \to \gamma\int_0^T\int_{\Gamma} v\mathcal{J}_N^{\beta/\gamma}[\xi]$  and  $|II+III| \to 0,$
from which we get $\mathcal{L}[v,\xi]=\gamma\int_0^T\int_{\Gamma} v\mathcal{J}_N^{\beta/\gamma}[\xi].$ So, $v$ satisfies $k\frac{\partial v}{\partial \textbf{n}}=\gamma \mathcal{J}_N^{\beta/\gamma}[v]$ on $\Gamma\times(0,T)$.

\medskip
\noindent 
\textbf{Case $3$}. $\sigma\mu\to \infty$ as $\delta \to 0$.
 
 \noindent 
Subcase (3i). $\mu\delta\to \beta \in [0,\infty)$. In this case, $ h  \to 0$. By virtue of (\ref{eq318}) and (\ref{eq332}), it holds that
\begin{equation*}
\begin{split}
      I&=\mu \delta \int_0^T \int_{\Gamma} \left(\Delta_\Gamma \xi + O(h^2)\right)u  \longrightarrow \beta \int_0^T \int_{\Gamma}v\Delta_\Gamma \xi. \\
\end{split}
\end{equation*}
Additionally, by (\ref{eq318}) and (\ref{eq333}), $|II+III|\to 0$ as $\delta \to 0$. Consequently, we get
\begin{equation*}\label{bc}
    \mathcal{L}[v,\xi]=\beta \int_0^T \int_{\Gamma}v\Delta_\Gamma \xi. \tag{3.34}
\end{equation*}

Our next task is to prove that $v$ is the weak solution of (\ref{EPDE}) with the boundary condition $k\frac{\partial v}{\partial \textbf{n}}=\beta \Delta_\Gamma v$ on $\Gamma \times (0, T)$. To this end, it remains to show $v \in L^2 \left((0,T); H^1(\Gamma)\right). $

We start by asserting that $\overline{v}$ is the unique weak solution of (\ref{EPDE}), which satisfies (\ref{bc}) as well. It suffices to prove $v=\overline{v}$. Now consider $v-\overline{v}$, without loss of generality, also denoted by $v$. We then points out that $v$ is the weak solution of (\ref{EPDE}) with $u_0=f=0$. In particular,  by Lemma \ref{Estimate1}, $v \in V^{1,0}_2 \left( \Omega_1 \times (0, T)\right) \cap W^{1,1}_2 \left( \Omega_1 \times (t_0, T)\right)$. 

For any small $t_0\in (0, T)$, fix $t_1\in (t_0, T]$. As $\delta \to 0$,  (\ref{eq322}) is transformed into
\begin{equation*}\label{eq335}
\begin{split}
    \int_{t_0}^{t_1} \int_{\Omega_1}(v_t\xi+k\nabla v \nabla \xi)dxdt&=\beta\int_{t_0}^{t_1} \int_{\Gamma}v\Delta_\Gamma \xi dsdt.\\
\end{split}\tag{3.35}
\end{equation*}
Furthermore, take the test function $\xi=w(s,t)\eta(r)$ with the following assumptions: $\eta=\eta(r)$ is a cut-off function in the $r$ variable with $0 \leq \eta\leq 1$, satisfying $\eta\in C^\infty(-\infty,0]$, $\eta=1$ for $-\epsilon \leq r\leq 0$ and $\eta=0$ for $r\leq -2\epsilon$; $w(s,t) \in C^2(\Gamma\times [0,T])$. From (\ref{eq335}), we are led to
\begin{equation*}\label{eq336}
    \begin{split}
        \beta \left|\int_{t_0}^{t_1} \int_{\Gamma}v\Delta_\Gamma \xi dsdt \right|&=\left|\int_{t_0}^{t_1} \int_{\Omega_1}(v_t\xi+k\nabla v \nabla \xi) dxdt\right | \leq C\|v\|_{W^{1,1}_2(\Omega_1\times (t_0,t_1))} \|w\|_{L^2((t_0,t_1);H^1(\Gamma))}.
    \end{split}\tag{3.36}
\end{equation*}
Consider such $w$ with  
$$\int_{t_0}^{t_1} \int_{\Gamma} w dsdt =0.
$$ 
We then define a linear functional $\mathcal{F}$: $w \to  \int_{t_0}^{t_1} \int_{\Gamma} v \Delta_{\Gamma} w dsdt,$ which is well-defined by $(\ref{eq336})$. This functional can be extended to the Hilbert space
\begin{equation*}
    \begin{split}
       \mathbb{H} = \{ w \in L^2\left((t_0,t_1);H^1(\Gamma)\right):  \int_{t_0}^{t_1}\int_{\Gamma} w  dsdt &=0\}
    \end{split}
\end{equation*}
with the inner product as $ \langle w_1, w_2 \rangle:= -\int_{t_0}^{t_1} \int_{\Gamma} \nabla_{\Gamma} w_1 \cdot \nabla_{\Gamma} w_2$. From Riesze representation theorem, there is some $z \in \mathbb{H} $ satisfying
\begin{equation*}\label{eq337}
    \begin{split}
       \mathcal{F}(w) &=-\int_{t_0}^{t_1} \int_{\Gamma}  \nabla_{\Gamma} z \cdot  \nabla_{\Gamma} w dsdt =\int_{t_0}^{t_1} \int_{\Gamma} z \Delta_{\Gamma} w dsdt.
    \end{split}\tag{3.37}
\end{equation*}
Consequently, it follows from (\ref{eq337}) that
$ \int_{t_0}^{t_1} \int_{\Gamma} (v-z) \Delta_{\Gamma} w = 0. $
By Riesze theorem again, this means that $v - z = m(t)$ for some function $m(t) \in \mathbb{H}$ and thus $ v \in L^2\left((0, T);H^1(\Gamma)\right)$.

 Going back to (\ref{eq335}), from Lemma \ref{Estimate1}, we have
\begin{equation*}
    \begin{split}
       \int_{\Omega_1} v^2 (x,t_1) dxdt \leq \int_{\Omega_1} v^2 (x,t_0) dxdt,
    \end{split} 
\end{equation*}
from which we are done by sending $t_0 \to 0$ for Subcase $( 3i)$.

\smallskip
\noindent Subcase (3ii). $\mu\delta\to \infty$ as $\delta\to 0$. In this case, $h\to H \in [0, \infty]$ after passing to a subsequence. If $H = 0$,  then divided both sides of the equation (\ref{eq58}) by $\mu\delta$ and sending $\delta\to 0$, we obtain
$\int_0^T \int_{\Gamma} v \Delta_\Gamma \xi  = 0$,
implying that $v(\cdot) = m(t) $ on $\Gamma$ for almost everywhere $t\in (0,T)$. 

If $H\in (0,\infty]$, then divided both sides of (\ref{eq58}) by $\sqrt{\sigma\mu}$ and sending $\delta\to 0$, we obtain $\int_0^T \int_{\Gamma} v \mathcal{J}_N^H [\xi] = 0$, implying that $v(\cdot) = m(t) $ on $\Gamma$ for almost everywhere $t\in (0,T)$. We further take a special test function $\xi=\xi(t)$ on $\Gamma$ and a constant extension in $\Omega_2$ such that $\overline{\xi}=\xi(t)$, resulting in
$\mathcal{L}[v,\xi]=0.$ So, $v$ satisfies
$\int_{\Gamma} \frac{\partial v}{\partial \textbf{n}}=0$ on $\Gamma\times (0, T)$. Therefore we accomplish the whole proof.
\end{proof}
We conclude this section by asking a natural question: what is the effective boundary condition if two eigenvalues of the coating in the tangent directions are not identical? That is to say, $A(x)$ has two different  eigenvalues in the  tangent directions. We answer this question by considering \textit{Type II} condition (\ref{case2}) in the next section.

\section{EBCs for \textit{Type II} condition }\label{sec: case2}
In this section we always assert that $\Gamma$ is a topological torus and $A(x)$ satisfies  \textit{Type II} condition (\ref{case2}). The aim of this section is to address EBCs on $\Gamma \times (0, T)$ as the thickness of the layer decreases to zero. 

With the aid of the curvilinear coordinates $(s, r)$, we choose a convenient local chart on $\Gamma$. For any $p_0 \in \Gamma$, the portion of $\Gamma$ near $p_0$ can be parameterized as $p = (s)$ with $p(0) = p_0$,
$$ \pmb{\tau}_1 = p_{s_1}  \text{~and~} \pmb{\tau}_2= p_{s_2}. 
$$
More precisely, let $ \Gamma := \Gamma_1 \times \Gamma_2$ with $p(s_1,0) \in \Gamma_1$ and $p(0, s_2)\in \Gamma_2$. In $\overline{\Omega}_2$, the explicit formula of $A(x)$ can be expressed as 
\begin{equation*}\label{eq340}
	A(x) = \sigma \textbf{n}(p)\otimes\textbf{n}(p) + \mu_1 \pmb{\tau}_1(p) \otimes \pmb{\tau}_1(p)
	+ \mu_2 \pmb{\tau}_2(p) \otimes \pmb{\tau}_2(p). 
\end{equation*}

\begin{theorem}\label{thm2}
Suppose that $\Gamma$ is a topological torus and $A(x)$ is given in (\ref{PDE1}) or (\ref{PDE2}) and satisfies (\ref{case2}). Let $u_0 \in L^2(\Omega)$ and $f \in L^2(Q_T)$ with functions being independent of $\delta$. Assume further that without loss of generality, $\mu_1 > \mu_2$. Moreover, $\sigma, \mu_1$, and $\mu_2$ satisfy the scaling relationships
\begin{equation*}
\begin{split}
   \lim_{\delta \to 0}\frac{\mu_2}{\mu_1} &= c \in [0, 1], \quad \lim_{\delta \to 0}\frac{\sigma}{\delta} = \alpha \in [0, 1],\\ \quad \lim_{\delta\to 0}\sigma\mu_i &=\gamma_i\in[0,\infty],  \quad
\lim_{\delta\to 0}\mu_i\delta=\beta_i\in[0,\infty], \quad i =1,2.
\end{split}
\end{equation*}
$(i)$ If $ c \in (0,1]$, then as $\delta \to 0$, $u\to v$ weakly in $W_2^{1,0}(\Omega_1 \times (0,T))$, strongly in $C([0,T];L^2(\Omega_1))$, where $v$ is the weak solution of (\ref{EPDE}) subject to the  effective boundary conditions listed in Table \ref{tb2}.

\smallskip
\noindent 
$(ii)$ If $ c= 0$ and $\underset{\delta \to 0}{\lim}\delta^2\mu_1/\mu_2 = 0$, then $u\to v$ weakly in $W_2^{1,0}(\Omega_1 \times (0, T))$, strongly in $C([0,T];L^2(\Omega_1))$, where $v$ is the weak solution of (\ref{EPDE}) subject to the  effective boundary conditions listed in Table \ref{tb3}. 
\end{theorem}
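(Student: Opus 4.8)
\textbf{Proof proposal for Theorem \ref{thm2}.}

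The plan is to mirror the structure of the proof of Theorem \ref{thm}, carrying the compactness result of Theorem \ref{cpt} as the starting point: after passing to a subsequence, $u \to v$ weakly in $W^{1,0}_2(\Omega_1\times(0,T))$ and strongly in $C([0,T];L^2(\Omega_1))$, and by the uniqueness in Theorem \ref{Uni} it suffices to identify $v$ with the weak solution of (\ref{EPDE}) under the appropriate effective boundary condition, after which the convergence holds without extracting a subsequence. The essential new feature is that the tangential part of $A(x)$ is no longer a scalar multiple of $\Delta_\Gamma$, so I cannot collapse the auxiliary problem to a single parameter $h$ by the substitution $r = R\sqrt{\sigma/\mu}$. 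Instead, in the global torus parametrization $p(s_1,s_2)$ with $\pmb{\tau}_i = p_{s_i}/|p_{s_i}|$, I would write the tangential operator as $\mu_1 \partial_{s_1}$-type plus $\mu_2 \partial_{s_2}$-type pieces and build the harmonic extension of the test function $\xi$ by solving, for each $t$, a problem of the form $\sigma\,\theta_{rr} + \mu_1 L_1\theta + \mu_2 L_2\theta = 0$ on $\Gamma\times(0,\delta)$ with $\theta(s,0,t)=\xi(s,0,t)$ and either $\theta(s,\delta,t)=0$ (Dirichlet) or $\theta_r(s,\delta,t)=0$ (Neumann); here $L_1,L_2$ are the one-dimensional second-order operators in $s_1,s_2$ coming from the metric. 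As in (\ref{eq38})--(\ref{eq48}) I would record the energy identities obtained by testing this auxiliary problem against $\theta$ and against $u$, and expand $-\int_{Q_T}\nabla\theta\cdot A\nabla u$ in curvilinear coordinates as $I+II+III$, where $II$ and $III$ absorb the curvature factor $2Hr+\kappa r^2$ and the discrepancy between $\nabla_s$ and $\nabla_\Gamma$ exactly as in (\ref{eq324})--(\ref{eq327}), all of which are $O(\delta)$ times the same fourth-root energy quantities and therefore negligible under Lemma \ref{Estimate1}.

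The case $(i)$, where $\mu_2/\mu_1 \to c \in (0,1]$, should go through most closely in parallel with Theorem \ref{thm}: after rescaling $r$ by $\sqrt{\sigma/\mu_1}$ (say), the auxiliary operator becomes $\Theta_{RR} + L_1\Theta + c\,L_2\Theta = 0$ on $\Gamma\times(0,h)$ with $h$ now governed by $\delta\sqrt{\mu_1/\sigma}$ and the anisotropy surviving only through the fixed constant $c$; one then defines anisotropic Dirichlet-to-Neumann operators $\mathcal{J}^h_D,\mathcal{J}^h_N$ via separation of variables in the joint eigenbasis of $L_1 + cL_2$ (a self-adjoint elliptic operator on the torus), and the asymptotic estimates analogous to (\ref{eq317})--(\ref{eq320}) follow from the same Taylor-expansion and maximum-principle arguments. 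Splitting into the scaling regimes determined by $\sigma/\delta\to\alpha$, $\sqrt{\sigma\mu_i}\to\gamma_i$, $\mu_i\delta\to\beta_i$ then produces Table \ref{tb2} entry by entry, including the degenerate-limit boundary conditions $\nabla_\Gamma v = 0$ with an integral constraint, handled by the linear/constant test-function trick of Subcase $(1iii)$ and the Riesz-representation argument used in Subcase $(3i)$ of Theorem \ref{thm} to upgrade $v$ to $L^2((0,T);H^1(\Gamma))$ when a Laplace--Beltrami-type boundary condition appears.

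The genuinely new difficulty, and the one I expect to be the main obstacle, is case $(ii)$: $c = 0$, i.e. $\mu_2/\mu_1 \to 0$, so the tangential part of $A$ degenerates in the $s_2$-direction and the naive rescaling no longer balances the two tangential terms — this is precisely the ``degenerate equation that never happens in two dimensions'' flagged in the introduction. Here the extra hypothesis $\delta^2\mu_1/\mu_2 \to 0$ must be used to control the anisotropy; the idea is that along $s_1$ the coating still diffuses strongly while along $s_2$ it behaves almost like the normal direction, so one should expect the effective behaviour in $s_2$ to collapse (e.g.\ $v$ forced to be independent of $s_1$ but possibly retaining $s_2$-dependence, or vice versa) and a one-dimensional Laplace--Beltrami operator $\partial_{s_1}^2$ along $\Gamma_1$ to appear in place of the full $\Delta_\Gamma$. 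I would therefore construct the harmonic extension more carefully, perhaps in two stages — first extending in $r$ treating $s_2$ as frozen, then correcting in $s_1$ — and track which energy terms survive division by $\sqrt{\sigma\mu_1}$, $\sqrt{\sigma\mu_2}$, $\mu_1\delta$ or $\mu_2\delta$; the hypothesis $\delta^2\mu_1/\mu_2\to 0$ should be exactly what guarantees the cross terms between the $s_1$- and $s_2$-derivatives of the extension are $o(1)$. Verifying that the resulting limiting functional matches the weak formulation in Definition \ref{def1} for each entry of Table \ref{tb3}, and confirming the required regularity of the trace of $v$ on the relevant one-dimensional subtorus, is where the bulk of the careful work will lie; the remaining regimes, and the Neumann problem, then follow by the same bookkeeping as in Theorem \ref{thm}, Step 2.
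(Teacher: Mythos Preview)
Your overall strategy for case $(i)$ is correct and matches the paper: rescale by $\sqrt{\sigma/\mu_1}$, obtain the limiting operator $\Theta_{RR}+\Theta_{s_1s_1}+c\,\Theta_{s_2s_2}=0$, define the anisotropic Dirichlet-to-Neumann maps $\mathcal{K}_D^H,\mathcal{K}_N^H$, and run the $I+II+III$ decomposition. But there is a genuine gap in your treatment of the error term $III$, and you mislocate where the hypothesis $\delta^2\mu_1/\mu_2\to 0$ is actually used.

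Your claim that $II$ and $III$ ``are $O(\delta)$ times the same fourth-root energy quantities'' is imported from Theorem \ref{thm} and is only correct for $II$. For $III$ --- the discrepancy $\nabla_s\psi\cdot A\nabla_s u-\nabla_\Gamma\psi\cdot A\nabla_\Gamma u$ --- the anisotropy $\mu_1\neq\mu_2$ interacts with the $r$-dependence of the metric $g^{ij}(s,r)$: Taylor-expanding $g^{ij}$ in $r$ and bounding the resulting mixed terms by the available energies $\mu_1\psi_{s_1}^2+\mu_2\psi_{s_2}^2$ forces a factor $\mu_1/\mu_2$ to appear. The paper obtains $|III|=O\bigl(\delta\sqrt{\mu_1/\mu_2}+\delta^2\mu_1/\mu_2\bigr)(\sigma\mu_1)^{1/4}\|\Psi_R\|_{L^\infty}^{1/2}$. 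For $c\in(0,1]$ this is still $O(\delta)$, but for $c=0$ the ratio $\mu_1/\mu_2$ blows up, and it is \emph{here} that $\delta^2\mu_1/\mu_2\to 0$ is needed to kill $III$. This is a geometric error already present in the single-stage extension, not a ``cross term between the $s_1$- and $s_2$-derivatives'' in some corrected two-stage construction.

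Relatedly, your proposed ``two-stage'' extension for $c=0$ is not the mechanism. The paper keeps the same auxiliary function $\psi$ solving (\ref{AF21}); as $\delta\to 0$ the rescaled problem (\ref{eq43}) degenerates to $\Psi^0_{RR}+\Psi^0_{s_1s_1}=0$ with $s_2$ a frozen parameter, which defines $\Lambda_D^H,\Lambda_N^H$. In the subcases where $\sigma\mu_1\to\infty$, dividing by $\sqrt{\sigma\mu_1}$ gives $\int_0^T\int_\Gamma v\,\Lambda_D^\infty[\xi]=0$, forcing $v_{s_1}=0$. Only \emph{after} this is established does one restrict to test functions with $\xi_{s_1}=0$ on $\Gamma$ and introduce a \emph{second} auxiliary function $\psi(s_2,r,t)$ solving $\sigma\psi_{rr}+\mu_2\psi_{s_2s_2}=0$; this one-dimensional problem in $s_2$ produces the operators $\mathcal{D}_D^H,\mathcal{D}_N^H$ that populate Table \ref{tb3}. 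So the argument is a bootstrap --- first deduce $v$ is constant in $s_1$, then refine with $s_1$-independent test functions and a reduced auxiliary problem --- rather than a single corrected extension.
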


\begin{table}[!htbp]
    \centering
    \caption{Effective boundary conditions on $\Gamma \times (0, T)$ for $c\in (0, 1]$.} \label{tb2}
    
    EBCs on $\Gamma\times (0,T)$ for (\ref{PDE1}).\\
    
    \medskip
    
    \begin{tabular}{l|lcl}
       As $\delta\to 0$   
       & \quad $\frac{\sigma}{\delta}\to 0$ 
       &  $\frac{\sigma}{\delta}\to\alpha\in(0,\infty)$ 
       & \quad $\frac{\sigma}{\delta}\to\infty$\\
        \hline
        \hline
      $\sigma\mu_1\to 0$  
      & \quad $\frac{\partial v}{\partial \textbf{n}}=0$	
      &  $k\frac{\partial v}{\partial \textbf{n}}=-\alpha v$	
      &  \quad $v=0$\\
        \hline
        $\sqrt{\sigma\mu_1}\to \gamma_1 \in (0, \infty)$	
        &  $k\frac{\partial v}{\partial \textbf{n}}=\gamma_1 \mathcal{K}_D^\infty [v]$ 
        &  $k\frac{\partial v}{\partial \textbf{n}}=\gamma_1 \mathcal{K}_D^{\gamma_1/\alpha} [v]$	
        & \quad $v=0$\\
        \hline
         $\sigma\mu_1\to \infty$
         &  \makecell{ 
        $\nabla_{\Gamma} v =0$,\\ $\int_{\Gamma}\frac{\partial v}{\partial\textbf{n}}=0 $ 
        }	& 
        \quad \makecell{$\nabla_{\Gamma} v =0$,\\ $\int_{\Gamma}(k\frac{\partial v}{\partial\textbf{n}}+\alpha v) =0 $
        }	
        & \quad $v=0 $  \\
        \hline
    \end{tabular}

    \bigskip
    
    EBCs on $\Gamma\times (0,T)$ for (\ref{PDE2}).\\
    
    \medskip
    
    \begin{tabular}{l|lcc}
       As $\delta\to 0$ 
       &  $\mu_1\delta\to 0$ 
       &   $\mu_1\delta \to \beta_1 \in(0,\infty)$ 
       &  $\mu_1\delta\to \infty$\\
        \hline
        
        $\sigma\mu_1\to 0$ 	
        &  $\frac{\partial v}{\partial \textbf{n}}=0$	
        &   $\frac{\partial v}{\partial \textbf{n}}=0$	
        &  $\frac{\partial v}{\partial \textbf{n}}=0$\\
         \hline
        $\sqrt{\sigma\mu_1}\to \gamma_1 \in (0,\infty)$	
        & $\frac{\partial v}{\partial \textbf{n}}=0$	
        & $k\frac{\partial v}{\partial \textbf{n}}=\gamma_1 \mathcal{K}_N^{\beta_1/\gamma_1} [v]$	
        & $k\frac{\partial v}{\partial \textbf{n}}=\gamma_1 \mathcal{K}_N^\infty [v]$\\
        
         \hline
         $\sigma\mu_1\to \infty$	
         
         &  $\frac{\partial v}{\partial \textbf{n}}=0$	
         
         & $k\frac{\partial v}{\partial\textbf{n}} = \beta_1 \left( \frac{\partial^2 v}{\partial \pmb{\tau}_1^2} + c \frac{\partial^2 v}{\partial \pmb{\tau}_2^2} \right)$ 
         
         &  
    \makecell{$\nabla_{\Gamma} v =0$,\\$\int_{\Gamma}\frac{\partial v}{\partial\textbf{n}}=0$}\\
    
        \hline
    \end{tabular}
\end{table}

\begin{table}[!htbp]
    \centering
    
    \caption{Effective boundary conditions on $\Gamma \times (0, T)$ for $c = 0$.}\label{tb3} 
    
    EBCs on $\Gamma\times (0,T)$ for (\ref{PDE1}).\\
    
    \medskip
     
    \begin{tabular}{l|lll}
        As $\delta\to 0$   
        &    \quad $\frac{\sigma}{\delta}\to 0$ 
        &  $\frac{\sigma}{\delta}\to\alpha\in(0,\infty)$ &  $\frac{\sigma}{\delta}\to\infty$\\
        \hline
        \hline
         $\sigma\mu_1\to 0$
         & \quad $\frac{\partial v}{\partial \textbf{n}}=0$	& \quad $k\frac{\partial v}{\partial \textbf{n}}=-\alpha v$	
         &  $v=0$\\
        \hline
         $\sqrt{\sigma\mu_1}\to \gamma_1 \in (0, \infty)$	
         &  $k\frac{\partial v}{\partial \textbf{n}}=\gamma_1 \Lambda_D^\infty [v]$ 
         &  $k\frac{\partial v}{\partial \textbf{n}}=\gamma_1 \Lambda_D^{\gamma_1/\alpha} [v]$	
         & $v=0$\\
         
        \hline
         $\sigma\mu_1\to \infty$, $\sigma\mu_2\to 0$
         & \makecell{$\frac{\partial v}{\partial \pmb{\tau}_1}=0$,\\
         $\int_{\Gamma_1}\frac{\partial v}{\partial \textbf{n}}=0$}
        & \makecell{$\frac{\partial v}{\partial \pmb{\tau}_1}=0$,\\
         $\int_{\Gamma_1}\left(\frac{\partial v}{\partial \textbf{n}} + \alpha v\right)=0$}
        & $v=0$
        \\
        \hline
        \makecell{$\sigma\mu_1\to \infty$,\\
         $ \sqrt{\sigma\mu_2} \to \gamma_2 \in (0, \infty)$}	
         &  \makecell{$\frac{\partial v}{\partial \pmb{\tau}_1}=0$,\\
         $\int_{\Gamma_1} \left(k\frac{\partial v}{\partial \textbf{n}}-\gamma_2 \mathcal{D}^\infty_D[v] \right)=0$}
        & 
        \makecell{$\frac{\partial v}{\partial \pmb{\tau}_1}=0$,\\
         $\int_{\Gamma_1}\left(k\frac{\partial v}{\partial \textbf{n}}-\gamma_2 \mathcal{D}^{\gamma_2/\alpha}_D[v]\right)=0$}
        & $v=0$\\
        
        \hline
        \makecell{
         $\sigma\mu_1\to \infty$,
         $\sigma\mu_2 \to \infty$}	
         
         &  \makecell{ 
        $\nabla_{\Gamma} v =0$,\\ $\int_{\Gamma}\frac{\partial v}{\partial\textbf{n}}=0 $ 
        }
        & 
        \makecell{ 
        $\nabla_{\Gamma} v =0$,\\ $\int_{\Gamma}\frac{\partial v}{\partial\textbf{n}}=0 $ }
        & $v =0$\\
        \hline
    \end{tabular}
\end{table}   
\begin{table}    
    \centering
    EBCs on $\Gamma\times (0,T)$ for (\ref{PDE2}).\\
    
    \medskip
      
    \begin{tabular}{l|lcc}
       As $\delta\to 0$ 
       & \quad $\mu_1\delta\to 0$ 
       & \qquad$\mu_1 \delta \to \beta_1 \in(0,\infty)$ 
       &  \qquad $\mu_1\delta\to \infty$\\
        \hline
        \hline
        $\sigma\mu_1\to 0$ 	
        &\quad $\frac{\partial v}{\partial \textbf{n}}=0$	
        &$\frac{\partial v}{\partial \textbf{n}}=0$	
        & \qquad $\frac{\partial v}{\partial \textbf{n}}=0$\\
         \hline
        $\sqrt{\sigma\mu_1} \to \gamma_1 \in (0,\infty)$	
        &\quad $\frac{\partial v}{\partial \textbf{n}}=0$	
        &\quad \qquad$k\frac{\partial v}{\partial \textbf{n}}=\gamma_1 \Lambda_N^{\beta_1/\gamma_1} [v]$	
        &\qquad \qquad $k\frac{\partial v}{\partial \textbf{n}}=\gamma_1 \Lambda_N^\infty [v]$\\
         \hline
        $\sigma\mu_1\to \infty$
        &\quad $\frac{\partial v}{\partial \textbf{n}}=0$	
        & \qquad $k\frac{\partial v}{\partial\textbf{n}} = \beta_1 \frac{\partial^2 v}{\partial \pmb{\tau}_1^2}$ 
         &  \qquad \qquad \textit{see next table}\\
        \hline
    \end{tabular}
    
    \bigskip
    
    \medskip
      
    \begin{tabular}{c|lcc}
       \makecell{As $\mu_1\delta \to \infty$, $\sigma\mu_1\to \infty$}
       & $\mu_2\delta \to 0$ &\quad$\mu_2\delta \to \beta_2\in(0,\infty)$ 
       &  $\mu_2 \delta \to \infty$\\
        \hline
        \hline
        \quad $\sigma\mu_2\to 0$ 	
        & \makecell{$\frac{\partial v}{\partial \pmb{\tau}_1}=0$,\\
         $\int_{\Gamma_1}\frac{\partial v}{\partial \textbf{n}}=0$}	
        & \makecell{$\frac{\partial v}{\partial \pmb{\tau}_1}=0$,\\
         $\int_{\Gamma_1}\frac{\partial v}{\partial \textbf{n}}=0$}	
        & \makecell{$\frac{\partial v}{\partial \pmb{\tau}_1}=0$,\\
         $\int_{\Gamma_1}\frac{\partial v}{\partial \textbf{n}}=0$}\\
         \hline
        $\sqrt{\sigma\mu_2} \to \gamma_2 \in (0,\infty)$	
        &\makecell{$\frac{\partial v}{\partial \pmb{\tau}_1}=0$,\\
         $\int_{\Gamma_1}\frac{\partial v}{\partial \textbf{n}}=0$}		
        
        & \makecell{$\frac{\partial v}{\partial \pmb{\tau}_1}=0$,\\
         $\int_{\Gamma_1}\left(k\frac{\partial v}{\partial \textbf{n}}-\gamma_2 \mathcal{D}_{N}^{\beta_2/\gamma_2} [v]\right)=0$}	
        &\makecell{$\frac{\partial v}{\partial \pmb{\tau}_1}=0$,\\
         $\int_{\Gamma_1}\left(k\frac{\partial v}{\partial \textbf{n}}-\gamma_2 \mathcal{D}_N^{\infty} [v]\right)=0$}\\
         \hline
         $\sigma\mu_2\to \infty$	
         &\makecell{$\frac{\partial v}{\partial \pmb{\tau}_1}=0$,\\
         $\int_{\Gamma_1}\frac{\partial v}{\partial \textbf{n}}=0$}	
         & \makecell{$\frac{\partial v}{\partial \pmb{\tau}_1}=0$,\\
         $\int_{\Gamma_1}\left(k\frac{\partial v}{\partial\textbf{n}}-\beta_2 \frac{\partial^2 v}{\partial \pmb{\tau}_2}\right) = 0$}	
         & \makecell{ 
        $\nabla_{\Gamma} v =0$,\\ $\int_{\Gamma}\frac{\partial v}{\partial\textbf{n}}=0 $ 
        }\\
        \hline
    \end{tabular}
\end{table}
The boundary condition $\frac{\partial v}{\partial \pmb{\tau}_1}=0$ on $\Gamma \times (0,T)$ means that $v$ is a constant in $s_1$ on $\Gamma$, but it may depend on $s_2$ and $t$. The boundary condition
$k\frac{\partial v}{\partial\textbf{n}} = \beta_1 \left( \frac{\partial^2 v}{\partial \pmb{\tau}_1^2} + c \frac{\partial^2 v}{\partial \pmb{\tau}_2^2} \right)$ can be viewed as a second-order partial differential equation on $\Gamma$.

For $H \in (0, \infty]$ and smooth $g(s)$,
$\mathcal{K}_D^{H}$ and $\mathcal{K}_N^{H}$ in Table \ref{tb2} are defined by 
$\left(\mathcal{K}_D^H[g],\mathcal{K}_N^H[g]\right)(s) :=\left(\Psi_R(s,0),\Phi_R(s,0) \right)$,
where $\Psi$ and $\Phi$ are, respectively, bounded solutions of
\begin{equation*}
    \left\{
             \begin{array}{ll}
             \Psi_{RR}+\Psi_{s_1s_1}+ c \Psi_{s_2s_2}=0 , &  \Gamma \times(0, H), \\
               \Psi(s, 0)=g(s), &   \Psi(s,H)=0,
             \end{array}
  \right.
\left\{
             \begin{array}{ll}
             \Phi_{RR}+ \Phi_{s_1s_1}+ c\Phi_{s_2s_2}=0 , &  \Gamma \times(0,H), \\
               \Phi(s,0)=g(s), &   \Phi_R(s,H)=0.
             \end{array}
  \right.
\end{equation*}
$\Lambda_D^{H}$ and $\Lambda_N^{H}$ in Table \ref{tb3} are defined by $\left(\Lambda_D^H[g], \Lambda_N^H[g]\right)(s):= \left(\Psi^0_R(s,0),\Phi^0_R(s,0)\right)$,
where $\Psi^0$ and $\Phi^0$ are the bounded solutions of
\begin{equation*}
    \left\{
             \begin{array}{ll}
             \Psi^0_{RR}+\Psi^0_{s_1s_1}=0 , &  \Gamma \times(0, H) ,\\
               \Psi^0(s,0)=g(s), &   \Psi^0(s,H)=0,
             \end{array}
  \right.
\left\{
             \begin{array}{ll}
             \Phi^0_{RR}+ \Phi^0_{s_1s_1}=0 , & \Gamma \times (0, H) ,\\
               \Phi^0(s,0)=g(s), &   \Phi^0_R(s,H)=0.
             \end{array}
 \right.
\end{equation*}
Finally, $\mathcal{D}_D^{H}$ and $\mathcal{D}_N^{H}$ are defined by $\left(\mathcal{D}_D^H[g], \mathcal{D}_N^H[g]\right)(s_2):= \left(\Psi_R(s_2,0), \Phi_R(s_2, 0)\right)$,
where $\Psi(s_2, R)$ and $\Phi(s_2, R)$ are the bounded solutions of
\begin{equation*}
    \left\{
             \begin{array}{ll}
             \Psi_{RR}+\Psi_{s_2s_2}=0 , &  \Gamma_2 \times(0, H) ,\\
               \Psi(s_2, 0) =g(s_2), &   \Psi(s_2,H)=0,
             \end{array}
  \right.
\quad
\left\{
             \begin{array}{ll}
             \Phi_{RR}+ \Phi_{s_2s_2}=0 , &  \Gamma_2 \times (0, H) ,\\
               \Phi(s_2,0)=g(s_2), &   \Phi_R(s_2,H)=0.
             \end{array}
 \right.
\end{equation*}

It is worth mentioning that the effective boundary conditions in Table \ref{tb2} are similar to those in \cite[Theorem 1]{CPW} and can be seen as a generalization in three dimensions, which implies that the smaller tangent thermal conductivity does not affect the boundary conditions if $ \lim_{\delta \to 0}\frac{\mu_2}{\mu_1} \neq 0$. However, under the assumption that $ \lim_{\delta \to 0} \frac{\mu_2}{\mu_1} = 0$, due to the smaller tangent thermal conductivity, new boundary conditions arise, which have not been encountered in the previous study (see the case of $\sqrt{\sigma \mu_2} \to \gamma_2 \in [0, \infty), \frac{\sigma}{\delta} \to \alpha \in [0, \infty)$ for \eqref{PDE1}, and $\sqrt{\sigma \mu_2} \to \gamma_2 \in [0, \infty], \mu_2\delta \to \beta_2 \in [0, \infty]$ for \eqref{PDE2}, as illustrated in Table \ref{tb3}).

\subsection{Definition, existence and uniqueness of weak solutions of effective models}
We define weak solutions of (\ref{EPDE}) together with some new boundary conditions from Table \ref{tb2} and \ref{tb3}.

\begin{definition}\label{def2}
Let the test function $\xi\in C^\infty (\overline{Q^1_T}) $ satisfy $\xi=0$ at $t=T$.

$(1)$  A function $v$ is said to be a weak solution of (\ref{EPDE}) with the boundary conditions $\frac{\partial v}{\partial \pmb{\tau}_1}=0$ and $\int_{\Gamma_1} \left(k\frac{\partial v}{\partial \textbf{n}}- B[v] \right) = 0$, where $B[v] = -\alpha v, \gamma_2\mathcal{D}^H_D[v]$ or $\gamma_2 \mathcal{D}^H_N[v]$ for $H \in (0, \infty]$,
if $v\in V^{1,0}_2(Q_T^1)$ and for almost everywhere fixed $t\in (0, T)$, the trace of $v$ on $\Gamma$ is a constant in $s_1$, and if for any test function $\xi$ satisfying $\frac{\partial \xi}{\partial \pmb{\tau}_1}=0$ on $\Gamma$, $v$ satisfies
$$\mathcal{L}[v,\xi] = \int_0^T\int_{\Gamma}v B[\xi] dsdt.
$$

$(2)$ A function $v$ is said to be a weak solution of (\ref{EPDE}) with the boundary condition $k\frac{\partial v}{\partial \textbf{n}}=\mathcal{B}[v]$, where $\mathcal{B}[v]= \gamma_1 \mathcal{K}_D^H[v] (\mathcal{K}_N^H[v])$, or $\gamma_1\Lambda_D^H[v] (\Lambda_N^H[v])$ for $H \in (0, \infty]$, if $v\in V^{1,0}_2(Q_T^1)$ and if for any test function $\xi$, $v$ satisfies
$$\mathcal{L}[v,\xi]=\int_0^T\int_{\Gamma} v \mathcal{B}[\xi] dsdt.
$$

$(3)$ A function $v$ is a weak solution of (\ref{EPDE}) with the boundary condition $k\frac{\partial v}{\partial\textbf{n}} = \beta_1 \left( \frac{\partial^2 v}{\partial \pmb{\tau}_1^2} + c \frac{\partial^2 v}{\partial \pmb{\tau}_2^2} \right)$ for $c \in [0,1]$, if $v\in V^{1,0}_2(Q_T^1)$ with its trace belonging to $L^2\left((0,T); H^1(\Gamma)\right)$, and if for any test function $\xi$, $v$ satisfies
$$
\mathcal{L}[v,\xi]=-\beta_1\int_0^T\int_{\Gamma} \left( \frac{\partial v}{\partial \pmb{\tau}_1} \frac{\partial \xi}{\partial \pmb{\tau}_1} + c \frac{\partial v}{\partial \pmb{\tau}_2}\frac{\partial \xi}{\partial \pmb{\tau}_2} \right) dsdt.
$$

$(4)$ A function $v$ is said to be a weak solution of (\ref{EPDE}) with the boundary conditions $\frac{\partial v}{\partial \pmb{\tau}_1}=0$ and $\int_{\Gamma_1} \left(k\frac{\partial v}{\partial\textbf{n}} - \beta_2\frac{\partial^2 v}{\partial \pmb{\tau}_2^2}\right)=0$, if $v\in V^{1,0}_2(Q_T^1)$ with its trace belonging to $L^2\left((0,T); H^1(\Gamma)\right)$ and being a constant in $s_1$, and if for any test function $\xi$ satisfying $\frac{\partial \xi}{\partial \pmb{\tau}_1}=0$ on $\Gamma$, $v$ satisfies
$$
\mathcal{L}[v,\xi]=-\beta_2\int_0^T\int_{\Gamma}  \frac{\partial v}{\partial \pmb{\tau}_2}\frac{\partial \xi}{\partial \pmb{\tau}_2}  dsdt.
$$
\end{definition}
Theorem \ref{Uni} also works for the existence and uniqueness of weak solutions of (\ref{EPDE}) together with above boundary conditions.

\subsection{Auxiliary functions }
We are now in a position to construct two auxiliary functions for \textit{Type II} condition (\ref{case2}). For every $t \in [0, T]$, let $\psi(s, r, t)$ and $\phi(s, r, t)$ be bounded solutions of 
\begin{equation*}\label{AF21}
\begin{split}
\left\{
    \begin{array}{ll}
      \sigma \psi_{rr}+\mu_1 \psi_{s_1s_1} + \mu_2 \psi_{s_2s_2}=0, & \Gamma \times(0, \delta),\\
    \psi(s,0,t) = g(s), &\psi(s, \delta, t) = 0,
    \end{array}
\right.
\end{split}\tag{4.1}
\end{equation*}
\begin{equation*}\label{AF22}
    \left\{
        \begin{array}{ll}
            \sigma \phi_{rr}+\mu_1 \phi_{s_1s_1}+ \mu_2\phi_{s_2s_2}=0, & \Gamma  \times(0, \delta),\\
            \phi(s,0,t) = g(s), & \phi_r(s, \delta, t)=0,
        \end{array}
  \right.\tag{4.2}
\end{equation*}
where $g(s) := \xi(s, 0, t)$. Let $r = R\sqrt{\sigma/\mu_1}$ and suppress the time dependence. Then, we define
$$\Psi^\delta(s, R):= \psi(s,R\sqrt{\sigma/\mu_1}, t), \quad \Phi^\delta(s, R) := \phi(s, R\sqrt{\sigma/\mu_1}, t).
$$ 
Plugging $r$ into (\ref{AF21}) and (\ref{AF22}) leads to
\begin{equation*}\label{eq43}
    \left\{
             \begin{array}{ll}
             \Psi^\delta_{RR}+ \Psi^\delta_{s_1s_1}+ \frac{\mu_2}{\mu_1}\Psi^\delta_{s_2s_2}=0 , &  \Gamma \times (0, h_1) ,\\
              \Psi^\delta (s ,0) = g(s), & \Psi^\delta (s, h_1) = 0,
             \end{array}
  \right.\tag{4.3}
\end{equation*}
\begin{equation*}\label{eq44}
\left\{
             \begin{array}{ll}
             \Phi^\delta_{RR}+\Phi^\delta_{s_1s_1}+ \frac{\mu_2}{\mu_1} \Phi^\delta_{s_2s_2}=0 , & \Gamma \times (0, h_1) ,\\
               \Phi^\delta(s, 0) = g(s), &   \Phi^\delta_R(s, h_1) = 0,
             \end{array}
  \right.\tag{4.4}
\end{equation*}
where $h_1 = \delta\sqrt{\sigma/\mu_1} $. We next estimate the size of $\Psi^\delta_R(s, 0)$ and $\Phi^\delta_R(s, 0)$ when the thickness of the thin layer is sufficiently small.

\smallskip
For  a fixed $ \delta >0$, rigorus formulas for $\Psi^\delta (s, R)$ and $\Phi^\delta (s, R)$ are expressed by separation of variables as follows
\begin{equation*}
	\begin{split}
		\Psi^\delta (s, R)
		=&-\sum_{n=1}^\infty \frac{\widetilde{g}^\delta_n \widetilde{e}^\delta_n(s)\left(e^{\sqrt{\widetilde{\lambda}^\delta_n}(R-h_1)}-e^{\sqrt{\widetilde{\lambda}^\delta_n}(h_1-R)}\right) }{2 \sinh\left(\sqrt{\widetilde{\lambda}^\delta_n}h_1\right)},
	\end{split}
\end{equation*}
\begin{equation*}
	\begin{split}
		\Phi^\delta (s, R)
		=&\sum_{n=1}^\infty \frac{\widetilde{g}^\delta_n \widetilde{e}^\delta_n(s)\left(e^{\sqrt{\widetilde{\lambda}^\delta_n}(R-h_1)}+e^{\sqrt{\widetilde{\lambda}^\delta_n}(h_1-R)}\right) }{2 \cosh\left(\sqrt{\widetilde{\lambda}^\delta_n}h_1\right)},
	\end{split}
\end{equation*}
where $\widetilde{\lambda}^\delta_n$ and $\widetilde{e}^\delta_n(s)$ are  the  eigenvalues and corresponding eigenfunctions of $-\widetilde{\Delta}^\delta_{\Gamma} := -\left(\frac{\partial^2}{\partial s_1^2} + \frac{\mu_2}{\mu_1}\frac{\partial^2}{\partial s_2^2}\right)$ defined on $\Gamma$ with $\widetilde{g}^\delta_{n} =  \langle \widetilde{e}^\delta_n, g \rangle := \int_{\Gamma} \widetilde{e}^\delta_n g ds.$ From this, a direct computation gives
\begin{equation*}\label{eqno45}
   \sqrt{\sigma\mu_1} \|     \Psi^\delta_R(s,0)\|_{L^\infty(\Gamma)}=\left\{
    \begin{aligned}
        &\frac{\sigma}{\delta}\left(-g(s)+O(h_1^2)\right),& & \text{ if }h_1 \to 0 \text{ as } \delta \to 0,\\
        & O(\sqrt{\sigma\mu_1}),& & \text{ if } h_1 \in (0,\infty] \text{ as } \delta \to 0,
    \end{aligned} 
\right. \tag{4.5}
\end{equation*}
\begin{equation*}\label{eq424}
   \sqrt{\sigma\mu_1} \| \Phi^\delta_R(s, 0)\|_{L^\infty(\Gamma)}=\left\{
    \begin{aligned}
        &\mu_1\delta \left(\widetilde{\Delta}^\delta_\Gamma g(s)+O(h_1^2) \right), &  &\text{ if } h_1 \to 0 \text{ as } \delta \to 0,\\
        & O(\sqrt{\sigma\mu_1}),& & \text{ if } h_1\in (0,\infty] \text{ as } \delta \to 0.
    \end{aligned} 
\right. \tag{4.6}
\end{equation*}

Before diving further, we are intended to consider the limiting equation as $\delta \to 0$. In the case of $c \in (0,1]$, if $h_1 \to H\in(0,\infty)$ as $\delta \to 0$, then (\ref{eq43}) and  (\ref{eq44}) yield
\begin{equation*}\label{NLE1}
\left\{
             \begin{array}{ll}
             \Psi_{RR}+\Psi_{s_1s_1}+ c \Psi_{s_2s_2}=0 , &  \Gamma \times (0, H),\\
               \Psi(s,0)=g(s), &  \Psi(s, H)=0,
             \end{array}
\right. 
\left\{
             \begin{array}{ll}
             \Phi_{RR}+ \Phi_{s_1s_1}+ c \Phi_{s_2s_2}=0 , &  \Gamma \times (0, H),\\
               \Phi(s,0)=g(s), &   \Phi_R(s,H)=0.
             \end{array}
  \right.\tag{4.7}
\end{equation*}
It is easy to see that each of them has a unique bounded solution. We  define
$\left(\mathcal{K}_D^H[g], \mathcal{K}_N^H[g] \right)(s):= \left(\Psi_R(s,0), \Phi_R(s,0) \right). $ Furthermore,  their analytic formulas are given by
\begin{equation*}\label{eq48}
\begin{split}
  \mathcal{K}_D^H[g](s)
  =&-\sum_{n=1}^\infty \frac{\sqrt{\widetilde{\lambda}_n} \widetilde{e}_n(s)\widetilde{g}_n}{\tanh(\sqrt{\widetilde{\lambda}_n}H)}, \qquad
  \mathcal{K}_N^H[g](s) 
  =-\sum_{n=1}^\infty \sqrt{\widetilde{\lambda}_n} \widetilde{e}_n(s)\widetilde{g}_n \tanh(\sqrt{\widetilde{\lambda}_n}H),
\end{split}\tag{4.8}
\end{equation*}
where $\widetilde{\lambda}_n$ and $\widetilde{e}_n(s)$ are  the  eigenvalues and the corresponding eigenfunctions of $-\widetilde{\Delta}_{\Gamma} := -\left(\frac{\partial^2}{\partial s_1^2} + c \frac{\partial^2}{\partial s_2^2}\right)$ defined on $\Gamma$ with 
$\widetilde{g}_{n} = \langle \widetilde{e}_n, g \rangle:= \int_{\Gamma} \widetilde{e}_n g ds.$ Hence, $ \mathcal{K}_D^\infty[g](s) 
=\mathcal{K}_N^\infty[g](s) =-(-\widetilde{\Delta}_\Gamma)^{1/2}g(s).$

\smallskip
On the other hand, in the case of $c=0$, if $h\to H\in(0,\infty)$ as $\delta \to 0$, then it is easy to find that the limits of (\ref{eq43}) and (\ref{eq44}) are given in the following
\begin{equation*}\label{eq49}
    \left\{
             \begin{array}{ll}
             \Psi^0_{RR}+\Psi^0_{s_1s_1} =0 , &  \Gamma \times(0, H),\\
               \Psi^0(s,0)=g(s), &   \Psi^0(s,H)=0,
             \end{array}
   \right. 
\quad
\left\{
             \begin{array}{ll}
             \Phi^0_{RR}+ \Phi^0_{s_1s_1} =0, & \Gamma \times (0, H),\\
               \Phi^0(s,0)=g(s), &   \Phi^0_R(s,H)=0,
             \end{array}
  \right.\tag{4.9}
\end{equation*}
from which we  define 
$\Lambda_D^H[g](s) := \Psi^0_R(s,0)$ and  $\Lambda_N^H[g](s) := \Phi^0_R(s,0).$

\smallskip
The limiting equations in $(\ref{eq49})$ are degenerate and they are second-order equations with nonnegative characteristic form,  which is of interest in its own right. We refer the interested reader to the book \cite{OR1973} and the references therein. 

Because of the geometry of $\Gamma $, analytic formulas for $\Lambda_D^H[g](s)$ and $\Lambda_N^H[g](s)$ can be given using separation of variables. It is straightforward to see that
\begin{equation*}\label{eq410}
    \Lambda_D^H[g](s) = - S(s_2) \sum_{n=1}^{\infty}  \sqrt{\lambda^1_n} \coth\left(\sqrt{\lambda^1_n} H\right) g_n(s_2)e_n^1(s_1), \tag{4.10}
\end{equation*}
\begin{equation*}\label{eq411}
    \Lambda_N^H[g](s) = -  S(s_2) \sum_{n=1}^{\infty} \sqrt{\lambda^1_n} \tanh\left(\sqrt{\lambda^1_n} H\right) g_n(s_2)e_n^1(s_1), \tag{4.11}
\end{equation*}
where $\lambda^1_n$ and $e^1_n(s_1)$ are  the  eigenvalues and the corresponding eigenfunctions of $-\Delta_{\Gamma_1} := -\frac{\partial^2}{\partial s_1^2} $ defined on $\Gamma_1$ with 
$g_{n} (s_2)= \langle e^1_n, g \rangle := \int_{\Gamma_1} e^1_n g ds_1$; $S(s_2)$ is a continuous function with regard to $s_2 $. Moreover, $ \left( \Lambda_D^\infty[g], \Lambda_N^\infty[g] \right)(s) =\underset{ H\to \infty}{\lim}\left(  \Lambda_D^H[g],  \Lambda_N^H[g]\right)(s).$

It is worthwhile to mention that  for $H \in (0, \infty]$, $\left(\mathcal{K}^H_D[g], \mathcal{K}^H_N[g]\right)$ and $\left(\Lambda^H_D[g], \Lambda^H_N[g]\right)$ are well-defined for  any $g \in H^{\frac{1}{2}}(\Gamma)$. Furthermore, all these operators are  linear and symmetric.

\smallskip
From now on, we discuss the existence and uniqueness of the solution of (\ref{eq49}). From the maximum principle, it turns out that $\Psi^\delta$ and $\Phi^\delta$ are uniformly bounded and equicontinuous on  $\Gamma \times (0, H)$. Consequently, the Arzela-Ascoli compact theorem ensures that
\begin{equation*}
 \Psi^\delta \to \Psi^0, \quad \Phi^\delta \to \Phi^0
\end{equation*}
uniformly in $\Gamma \times (0, H)$ after passing to a subsequence of $\delta \to 0$; moreover, the limiting functions
\begin{equation*}
  \Psi^0 \in C\left( \Gamma \times (0, H) \right) \quad \text{ and } \quad \Phi^0 \in C\left( \Gamma \times (0, H) \right).
\end{equation*}
Our next task is to establish the uniqueness of $\Psi^0$ and $\Phi^0$. Following this goal, let $\Psi^0_1$ and $\Psi^0_2 $ be two solutions of the former equation in (\ref{eq410}); let $\Phi^0_1$ and $\Phi^0_2 $ be two solutions of the latter equation in (\ref{eq411}). Without loss of generality, consider $\Psi^0 = \Psi^0_1 - \Psi^0_2$ and $\Phi^0 = \Phi^0_1 - \Phi^0_2$, satisfying
\begin{equation*}\label{eq414}
    \left\{
             \begin{array}{ll}
             \Psi^0_{RR}+\Psi^0_{s_1s_1} =0 , & \Gamma \times(0, H),\\
               \Psi^0(s,0)=0, &   \Psi^0(s,H)=0;
             \end{array}
   \right. 
\left\{
             \begin{array}{ll}
             \Phi^0_{RR}+ \Phi^0_{s_1s_1} =0, & \Gamma \times (0, H),\\
               \Phi^0(s,0)=0, &   \Phi^0_R(s,H)=0.
             \end{array}
  \right.\tag{4.12}
\end{equation*}
Suppressing  the $s_2$ variable, letting $W (s_1, R):= \Psi^0(s_1,s_2,R) $ and  $V (s_1, R):= \Phi^0(s_1,s_2,R) $, we have
\begin{equation*}
	\left\{
	\begin{array}{ll}
		W_{RR}+W_{s_1s_1} =0 , & \Gamma_1  \times(0, H),\\
		W(s,0)=0, &   W(s,H)=0,
	\end{array}
	\right. 
	\left\{
	\begin{array}{ll}
		V_{RR}+ V_{s_1s_1} =0, & \Gamma_1 \times (0, H),\\
		V(s,0)=0, &   V_R(s,H)=0.
	\end{array}
	\right. \tag{4.13}
\end{equation*}
From the maximum principle, it turns out that $W=V=0$. Thus, the assertion of uniqueness of $\Psi^0$ and $\Phi^0$ is completed.

\subsection{Proof of Theorem \ref{thm2}}
The purpose of this subsection is to prove Theorem \ref{thm2} and address EBCs on $\Gamma \times (0, T)$.
\begin{proof}[The proof of Theorem \ref{thm2}]
By Theorem \ref{cpt}, given any subsequence of $\delta$, we can ensure that $u \to v$ weakly in $W^{1,0}_{2} \left(\Omega_1\times (0, T) \right)$, and strongly in $C\left( [0, T]; L^2(\Omega_1) \right)$  after passing to a further subsequence. In the following, we will prove that $v$ is a weak solution of (\ref{EPDE}) with boundary conditions listed in Table \ref{tb2} and \ref{tb3}. Because of the uniqueness as proved in Theorem \ref{Uni}, $u \to v$ without passing to any subsequence of $\delta >0$.

\smallskip
Based on the scaling relationships of $\sigma$, $\mu_1$ and $\mu_2 $ as $\delta \to 0$,  we obtain the effective boundary conditions for the Dirichlet probplem (\ref{PDE1}) and  the Neumann problem (\ref{PDE2}) respectively. 

\smallskip
\noindent 
\textbf {Step 1. Effective boundary conditions for the Dirichlet problem (\ref{PDE1}).}

\smallskip
Let $\xi\in C^{\infty}(\overline{\Omega}_1\times [0,T])$ with $\xi=0$ at $t=T$ and extend $\xi$ to the domain $\Omega \times (0, T)$ by defining
\begin{equation*}
  \overline{\xi}(x,t)=\left\{
       \begin{aligned}
    &\xi(x,t), & x\in \overline{\Omega}_1, \\
    &\psi(s(x), r(x), t), & x\in\Omega_2,
\end{aligned}
\right.
\end{equation*}
where $\psi$ is the solution of (\ref{AF21}) and it is easy to check $\overline{\xi}\in W^{1,1}_{2,0}(Q_T)$. 

By the weak convergence of $\{u\}_{\delta >0}$, as $\delta \to 0$, it follows from Definition \ref{def1} that 
\begin{equation*}\label{eq416}
    \begin{split}
       & \mathcal{L}[v, \xi] =-\underset{\delta \to 0}{\lim}\int_0^T\int_{\Omega_2}\nabla \psi \cdot A\nabla u dx dt.
      \end{split} \tag{4.14}
\end{equation*}
In the curvilinear coordinates $(s, r)$, the right-hand side of (\ref{eq416}) gives
\begin{equation*}\label{eq62}
    \begin{split}
     RHS := & -\int_0^T\int_{\Omega_2}\nabla \psi \cdot A\nabla u dx dt \\
     = &-\int_0^T\int_0^\delta\int_{\Gamma} (\sigma     \psi_r u_r+\nabla_\Gamma \psi \cdot A \nabla_\Gamma u) \\
     &-\int_0^T\int_0^\delta \int_{\Gamma} (\sigma     \psi_r u_r+\nabla_\Gamma \psi \cdot A \nabla_\Gamma u)(2H r+\kappa r^2)  \\
    & -\int_0^T\int_0^\delta \int_{\Gamma}(\nabla_s     \psi \cdot A \nabla_s u-\nabla_\Gamma \psi \cdot A \nabla_\Gamma u)(1+2Hr+\kappa r^2)\\
    =:&I+II+III.
    \end{split} \tag{4.15}
\end{equation*}
Multiplying (\ref{AF21}) by $u$ and performing integration by parts, we obtain
\begin{equation*}\label{eqno63}
  \begin{split}
     I :=\int_0^T \mathcal{I}dt& = \int_0^T\int_0^\delta \int_{\Gamma} \left(\sigma \psi_r u_r + \mu_1 \psi_{s_1} u_{s_1} +\mu_2 \psi_{s_2}u_{s_2}\right) dsdrdt\\
     & = -\int_0^T\int_{\Gamma} \sigma \psi_r(s,0,t) u(s,0,t)dsdt.
  \end{split}\tag{4.16}
\end{equation*}
Subsequently, it follows from (\ref{eq43}) and (\ref{eqno45}) that
\begin{equation*}\label{eqno64}
\begin{split}
    |II|  = & O(\delta) \int_0^T \left( \int_0^\delta \int_{\Gamma}  \sigma \psi_r^2 + \nabla_\Gamma \psi \cdot A \nabla_\Gamma \psi  \right)^{1/2}  \left( \int_0^\delta \int_{\Gamma} \sigma u_r^2 + \nabla_\Gamma u \cdot A \nabla_\Gamma u  \right)^{1/2}dt\\
    = & O(\delta) \int_0^T \frac{1}{\sqrt{t}}\left(\int_\Gamma \sigma |\psi_r(s,0,t)|ds \right)^{1/2}dt\\
    = & O(\delta)  \sqrt{T}(\sigma\mu_1)^{1/4} (||\Psi^\delta_R(s,0)||_{L^\infty(\Gamma)} )^{1/2},
\end{split}\tag{4.17}
\end{equation*}
where Lemma \ref{Estimate1} and H\"older inequality were used. 

\smallskip
To get the estimate for the third term $III$, using the curvilinear coordinates $(s, r)$, \eqref{case2} and \eqref{derivative}, we have 
\begin{equation*}
	\begin{split}
	\nabla_\Gamma u \cdot A \nabla_\Gamma \psi 
		=&\left(\sum_{k,l=1,2} g^{kl}(s,0) u_{s_k} p_{s_l} \right)\cdot \sum_{i,j=1,2} g^{ij}(s,0) \psi_{s_i}\mu_j p_{s_j} \\
		=&\sum_{i = 1, 2} \mu_i u_{s_i} \psi_{s_i} 
	\end{split}
\end{equation*}
and 
\begin{equation*}
	\begin{split}
	&\nabla_s u \cdot A \nabla_s \psi\\
	=&\left(\sum_{k,l=1,2} g^{kl}(s,r) u_{s_k} F_{s_l}(s,r) \right)\cdot \sum_{i,j=1,2} g^{ij}(s,r) \psi_{s_i}(\mu_j p_{s_j} + r A \textbf{n}_{s_j})\\
	=&\sum_{k,l=1,2}  \left( g^{kl}(s,0) + r g_r^{kl}(s, \overline{r}) \right) u_{s_k} (p_{s_l} + r \textbf{n}_{s_l}) \cdot \sum_{i,j=1,2} g^{ij}(s,r) \psi_{s_i}(\mu_j p_{s_j} + r A \textbf{n}_{s_j})\\
	= & \sum_{i = 1, 2} \mu_i u_{s_i} \psi_{s_i}  + O(r) \sum_{i, k= 1, 2} (\mu_i +\mu_k) u_{s_k} \psi_{s_i}  +O(r^2) (\mu_1 +\mu_2)\sum_{i, k= 1, 2}  u_{s_k} \psi_{s_i} 
	\end{split}
\end{equation*}
where we used Taylor expansion on $g^{ij}(s, r)$ and $\overline{r} \in (0, r)$. By virtue of these two formulas,  using \eqref{AF21} and Lemma \ref{Estimate1}, we get 
\begin{equation*}\label{eq420}
\begin{split}
    |III| =& O(1) \left| \int_0^T\int_0^\delta \int_{\Gamma} \nabla_s     \psi \cdot A \nabla_s u-\nabla_\Gamma \psi \cdot A \nabla_\Gamma u \right| \\
    = &  O(1)\left(\delta \sqrt{\frac{\mu_1}{\mu_2}} \right) \left(1+\delta \sqrt{\frac{\mu_1}{\mu_2}} \right) \int_0^T \int_0^\delta \int_{\Gamma} \left|\nabla_\Gamma \psi \cdot A \nabla_\Gamma u \right|\\
    = & O(1) \left(\delta \sqrt{\frac{\mu_1}{\mu_2}} \right) \left(1+\delta \sqrt{\frac{\mu_1}{\mu_2}} \right) \sqrt{T}(\sigma\mu_1)^{1/4} ||\Psi_R(s,0)||^{1/2}_{L^\infty(\Gamma)},
\end{split}\tag{4.18}
\end{equation*}
where we also used the following inequalities
$$
\sum_{i, k= 1, 2} (\mu_i +\mu_k) u_{s_k} \psi_{s_i}  \le C  \sqrt{\frac{\mu_1}{\mu_2}} \left| \sum_{i = 1, 2} \mu_i u_{s_i} \psi_{s_i}\right|
$$
and 
$$
(\mu_1+\mu_2)\sum_{i, k= 1, 2} u_{s_k} \psi_{s_i}  \le C  \frac{\mu_1}{\mu_2} \left| \sum_{i = 1, 2} \mu_i u_{s_i} \psi_{s_i}\right|.
$$

\smallskip

To investigate the asymptotic behavior of (\ref{eq62}) as $\delta \to 0$, we consider the following three cases 
$$
(1) \frac{\sigma}{\delta}\to 0, \quad (2) \frac{\sigma}{\delta}\to \alpha\in (0,\infty), \quad (3) \frac{\sigma}{\delta}\to \infty.
$$

\medskip
\noindent
\textbf{Case $1$}. $\frac{\sigma}{\delta}\to 0$ as $\delta \to 0$.

\noindent 
Subcase $(1i)$. $\sigma\mu_1\to 0$ as $\delta\to 0$. In view of (\ref{eq416}) - (\ref{eq420}) and H\"older inequality, we have
\begin{equation*}
    \begin{split}
      |RHS| \leq& O(1) \int_0^T\left(\int_0^\delta \int_{\Gamma} \left(\sigma \psi_r^2+\mu_1 \psi^2_{s_1s_1} + \mu_2\psi^2_{s_2s_2} \right)\right)^{1/2} dt
      \leq O(\sqrt{T}) \max\{ \sqrt{\frac{\sigma}{\delta}}, (\sigma\mu_1)^{1/4} \},
    \end{split}
\end{equation*}
where (\ref{eqno45}) and Lemma \ref{Estimate1} were used. Thus, we have $\mathcal{L}[v,\xi]=0$, showing that $v$ satisfies  $\frac{\partial v}{\partial \textbf{n}}=0$
on $\Gamma \times (0, T)$.

\smallskip
\noindent 
Subcase $(1ii)$. $\sqrt{\sigma\mu_1} \to \gamma_1 \in (0, \infty)$ as $\delta\to 0$. In this case, $h_1\to \infty$.  If $c \in (0, 1]$, then as $\delta\to 0$, it follows from (\ref{eq48}) and (\ref{eqno63}) that
\begin{equation*}
    \begin{split}
      \mathcal{I} & = \sqrt{\sigma\mu_1} \int_\Gamma \Psi^\delta_R(s, 0) u  \to \gamma_1 \int_{\Gamma} v\mathcal{K}_D^\infty[\xi].
    \end{split} \tag{4.19}
\end{equation*}
On the other hand, if $c = 0$, then from (\ref{eq410}) and (\ref{eqno63}), we have $\mathcal{I} \to \gamma_1 \int_{\Gamma} v \Lambda_D^\infty[\xi]$ as $\delta \to 0$.

Because of the assumption that $\delta\sqrt{\mu_1/\mu_2} \to 0$ as $\delta \to 0$, (\ref{eqno64}) and (\ref{eq420}) lead to $|II+III| \to 0 \text{ as } \delta\to 0.$
Hence, for $c \in (0, 1]$, we obtain
\begin{equation*}
    \mathcal{L}[v, \xi] = \gamma_1 \int_0^T \int_{\Gamma}v\mathcal{K}_D^\infty[\xi], \tag{4.20}
\end{equation*}
indicating that $v$ satisfies $k\frac{\partial v}{\partial \textbf{n}}=\gamma_1 \mathcal{K}_D^\infty[v]$ on $\Gamma \times (0,T)$; for $c = 0$, we obtain
$\mathcal{L}[v, \xi] = \gamma_1 \int_0^T \int_{\Gamma}v\Lambda_D^\infty[\xi], $
indicating that $v$ satisfies $k\frac{\partial v}{\partial \textbf{n}}=\gamma_1 \Lambda_D^\infty[v]$ on $\Gamma \times (0,T)$.

\smallskip
\noindent 
Subcase $(1iii)$. $\sigma\mu_1 \to \infty$ as $\delta\to 0$. In this case, $h_1 \to \infty$ as $\delta \to 0$. Divided both sides of (\ref{eq416}) by $\sqrt{\sigma\mu_1}$ and sending $\delta\to 0$, by (\ref{eqno63})-(\ref{eq420}), we are led to
\begin{equation*}\label{eq421}
\begin{split}
    \int_0^T\int_{\Gamma}v\mathcal{K}_D^\infty[g]=0 \text{ for } c\in (0, 1]  \quad \text{  and  } \quad  \int_0^T\int_{\Gamma}v\Lambda_D^\infty[g] = 0 \text{ for } c = 0.
\end{split}\tag{4.21}
\end{equation*}
In the case of $c\in (0,1]$, it holds from (\ref{eq48}) that $ \nabla_\Gamma v = 0$. By the similar proof in Subcase $(1iii)$ from Step 1 in the last section,  $v$ satisfies the boundary condition  $\int_{\Gamma}\frac{\partial v}{\partial \textbf{n}}=0$ on $\Gamma \times (0,T)$.

\smallskip
In the case of $c=0$, it follows from (\ref{eq410}) and (\ref{eq421}) that $v_{s_1} = 0$ on $\Gamma \times (0,T)$.  Next, choose the test function $\xi$ satisfying $\xi_{s_1} = 0$ on $\Gamma$. Let $\psi$ be a constant in $s_1$, and $\psi = \psi(s_2, r, t)$ is defined by 
\begin{equation*}\label{AF23}
    \left\{
        \begin{array}{ll}
             \sigma \psi_{rr}+ \mu_2\psi_{s_2s_2}=0, & \Gamma_2 \times(0, \delta),\\
              \psi(s_2, 0, t) = g(s_2), & \psi(s_2, \delta, t)=0,
        \end{array}
  \right.\tag{4.22}
\end{equation*}
where $g(s_2):= \xi(s_2, 0, t)$. The right-hand side of (\ref{eq416}) now depends on the relationships of $\delta$, $\sigma$ and $\mu_2$.

Continuing what we have done before, letting $r = R \sqrt{\sigma/\mu_2}$ and suppressing the $t$ dependence, we have $ \Psi^\delta(s_2, R): = \psi(s_2, R \sqrt{\sigma/\mu_2}, t)$. Substituting $r$ into (\ref{AF23}), we are led to
\begin{equation*}\label{eq69}
    \left\{
        \begin{array}{ll}
              \Psi^\delta_{RR}+ \Psi^\delta_{s_2s_2}=0, & \Gamma_2 \times(0, h_2),\\
              \Psi^\delta(s_2, 0) = g(s_2), & \Psi^\delta(s_2, h_2)=0,
        \end{array}
  \right.
 \longrightarrow 
  \left\{\begin{array}{ll}
  	\Psi_{RR}+ \Psi_{s_2s_2}=0, & \Gamma_2 \times(0, H),\\
  	\Psi(s_2, 0) = g(s_2), & \Psi^(s_2, H)=0,
  \end{array}
  \right. \tag{4.23}
\end{equation*}
where $h_2 = \delta \sqrt{\mu_2/\sigma}$. Moreover, we define $\mathcal{D}^H_{D}[g] (s_2):= \Psi_R(s_2, 0)$. We estimate the size of $\Psi^\delta_R(s_2, 0)$ as in (\ref{eqno45}), resulting in
\begin{equation*}\label{eq4160}
   \sqrt{\sigma\mu_2} \|     \Psi^\delta_R(s_2,0)\|_{L^\infty(\Gamma_2)}=\left\{
    \begin{aligned}
        &\frac{\sigma}{\delta}\left(-g(s_2)+O(h_2^2)\right),& & \text{ if }h_2 \to 0 \text{ as } \delta \to 0,\\
        & O(\sqrt{\sigma\mu_2}),& & \text{ if } h_2 \in (0,\infty] \text{ as } \delta \to 0.
    \end{aligned} 
\right. \tag{4.24}
\end{equation*}
If $\sigma\mu_2 \to 0$ as $\delta \to 0$, then by the similar argument in Subcase $(1i)$, we get $\mathcal{L}[v, \xi]  = 0$, showing that $v$ satisfies $ \int_{\Gamma_1}\frac{\partial v}{\partial \textbf{n}}=0$ on $\Gamma \times (0,T)$; if $\sqrt{\sigma\mu_2} \to \gamma_2 \in (0, \infty)$ as $\delta \to 0$, then by the similar argument in Subcase $(1ii)$, we get
\begin{equation*}
    \mathcal{L}[v, \xi] = \gamma_2 \int_0^T \int_{\Gamma}v\mathcal{D}_D^\infty[\xi], 
\end{equation*}
showing that $v$ satisfies $ \int_{\Gamma_1}\left(k\frac{\partial v}{\partial \textbf{n}} - \gamma_2\mathcal{D}_D^\infty[v]\right)=0$ on $\Gamma \times (0,T)$; if $\sigma\mu_2 \to \infty $ as $\delta \to 0$, then by the similar argument in Subcase $(1iii)$ from Section \ref{sec3.2}, we have
\begin{equation*}\label{eq4161}
\begin{split}
    \int_0^T\int_{\Gamma}v\mathcal{D}_D^\infty[g]=0,
\end{split}\tag{4.25}
\end{equation*}
which indicates that $ v_{s_2}=0$ on $ \Gamma\times(0, T)$. Thus, $v$ is a constant on $\Gamma$ in the spatial variable. Assume further that $\xi = m(t)$ on $\Gamma$ and $\psi(s, r, t) =\left(1 - r/\delta \right)m(t)$. Using the same technique in (\ref{eq58}), we get $\mathcal{L}[v,\xi]=0$ from which $v$ satisfies $\int_{\Gamma}\frac{\partial v}{\partial \textbf{n}}=0$ on $\Gamma \times (0,T).$

\medskip
\noindent 
\textbf{Case $2$}. $\frac{\sigma}{\delta}\to \alpha \in (0, \infty)$ as $\delta \to 0$.

\noindent 
Subcase $(2i)$. $\sigma\mu_1 \to 0$. In this case, $h_1 \to 0$. A combination of $(\ref{eq62})-(\ref{eq420})$ gives rise to
\begin{equation*}
    \mathcal{L}[v,\xi]=-\alpha\int_0^T\int_{\Gamma}v\xi,
\end{equation*}
from which $v$ satisfies the boundary condition $k\frac{\partial v}{\partial \textbf{n}}=-\alpha v$ on $\Gamma \times (0, T).$ 

\smallskip

\noindent 
Subcase $(2ii)$. $\sqrt{\sigma\mu_1} \to \gamma_1 \in (0, \infty)$. Like what we did in Subcase $(1ii)$, as $\delta \to 0$, if $c \in (0, 1]$, we have
\begin{equation*}
    \mathcal{L}[v, \xi] = \gamma_1 \int_0^T \int_{\Gamma}v\mathcal{K}_D^{\gamma_1/\alpha}[\xi], 
\end{equation*}
resulting in the boundary condition 
$k\frac{\partial v}{\partial \textbf{n}}=\gamma \mathcal{K}_D^{\gamma_1/\alpha}[v].$ 

On the other hand, if $c = 0$, we then have $\mathcal{L}[v, \xi] = \gamma_1 \int_0^T \int_{\Gamma}v\Lambda_D^{\gamma_1/\alpha}[\xi],$
resulting in the boundary condition $k\frac{\partial v}{\partial \textbf{n}}=\gamma_1 \Lambda_D^{\gamma_1/\alpha}[v]$ on $\Gamma \times (0,T)$.

\smallskip
\noindent 
Subcase $(2iii)$. $\sigma\mu_1 \to \infty$ as $\delta\to 0$. Following the proof of Subcase $(1iii)$, we are led to
\begin{equation*}
\begin{split}
 \int_0^T\int_{\Gamma}v\mathcal{K}_D^\infty[g]=0 \text{ for } c\in (0, 1] \quad \text{    and   }\quad
  \int_0^T\int_{\Gamma}v\Lambda_D^\infty[g] = 0, \text{ for } c =0.
\end{split}
\end{equation*}
Therefore, if $c \in (0, 1]$, then $\nabla v = 0$ on $\Gamma$, implying that $v$  satisfies $\int_{\Gamma}\frac{\partial v}{\partial \textbf{n}}=0$ on $\Gamma \times (0,T)$. 

On the other hand, if $c = 0$, then $v_{s_1} =0$. By further taking $\xi = \xi(s_2, r, t)$ and $\psi$ to be defined in (\ref{AF23}), performing the procedure in Subcase $(1iii)$, we arrive at the following results: if $\sigma\mu_2 \to 0$ as $\delta \to 0$, then
 $v$ satisfies $\int_{\Gamma_1}\frac{\partial v}{\partial \textbf{n}}=0$ on $\Gamma \times (0, T)$; if $\sqrt{\sigma\mu_2} \to \gamma_2 \in (0, \infty)$ as $\delta \to 0$, then  $v$ satisfies $\int_{\Gamma_1}\left(k\frac{\partial v}{\partial \textbf{n}} - \gamma_2\mathcal{D}_D^\infty[v]\right)=0;$
if $\sigma\mu_2 \to \infty$ as $\delta \to 0$, then  $v$ satisfies $ \nabla_\Gamma v = 0$ and
$\int_{\Gamma} \left(k\frac{\partial v}{\partial \textbf{n}}+\alpha v \right)=0$ on $\Gamma \times (0, T)$.

\medskip
\noindent
\textbf{Case $3$}. $\frac{\sigma}{\delta}\to \infty$ as $\delta \to 0$. 

\noindent 
Subcase $(3i)$. $\sqrt{\sigma\mu_1} \to \gamma_1 \in  [0, \infty)$. In this case, $h_1 \to 0$. In view of (\ref{eq62})- (\ref{eq420}), divided both sides of (\ref{eq416}) by $\sigma/\delta$ and sending $\delta \to 0$, we get
$\int_0^T\int_{\Gamma}v\xi=0$ from which $v$ satisfies $v=0$ on $\Gamma \times (0, T)$.

\medskip
\noindent 
Subcase $(3ii)$. $\sigma\mu_1 \to \infty$ as $\delta \to 0$. For the case of $c \in (0, 1]$, using the similar proof in Subcase $(3ii)$ in Section \ref{sec3.2}, we have the boundary condition $v =0$ on $\Gamma \times (0,T)$. 

On the other hand, for the case of $c =0$, after passing to a subsequence, we have $h_1 \to H\in[0,\infty]$ as $\delta\to 0$.  In view of (\ref{eq416})-(\ref{eq420}) and (\ref{eq410}), if $H =0$, then $v$ satisfies the boundary condition  $v=0$. Otherwise, if $H\in (0,\infty]$, we obtain
\begin{equation*}
 \int_0^T\int_{\Gamma}v \Lambda_D^H[\xi]=0,
\end{equation*}
showing that $v_{s_1} = 0$. Again, by taking $\xi = \xi(s_2, r, t)$ and $\psi$ defined in (\ref{AF23}), performing the procedure in Subcase $(1iii)$, we have $v = 0$ on $\Gamma \times (0, T)$.

\medskip
\noindent 
\textbf{Step 2. Effective boundary conditions for the Neumann problem (\ref{PDE2}). }

\smallskip
Let $\xi\in C^{\infty}(\overline{\Omega}_1\times [0,T])$ with $\xi=0$ at $t=T$. We extend $\xi$ to the domain $\Omega \times (0, T)$ by defining
\begin{equation*}
  \overline{\xi}(x,t)=\left\{
       \begin{aligned}
    &\xi(x,t), & x\in \overline{\Omega}_1, \\
  &\phi(s(x), r(x), t), & x\in\Omega_2,
\end{aligned}
\right.
\end{equation*}
where $\phi$ is the unique solution of (\ref{AF22}) and $\overline{\xi}\in W^{1,1}_{2}(Q_T)$.

Due to the weak convergence of $u \to v$ as $\delta \to 0$, it follows from Definition \ref{def} that
\begin{equation*}\label{eq426}
\begin{split}
    \mathcal{L}[v,\xi] &= -\underset{\delta \to 0}{\lim}\int_0^T\int_{\Omega_2}\nabla \phi \cdot A\nabla u dx dt.
\end{split}\tag{4.26}
\end{equation*}
In the curvilinear coordinates $(s, r)$, rewrite the right-hand side of (\ref{eq426}) as
$$RHS:=- \int_0^T\int_{\Omega_2}\nabla \phi \cdot A\nabla u dx dt = I+II+III,
$$
where 
\begin{equation*}\label{eq427}
  \begin{split}
     I  := \int_0^T\mathcal{I} dt &= -\int_0^T\int_0^\delta \int_{\Gamma} \left(\sigma \phi_r u_r + \nabla_\Gamma \phi \cdot A \nabla u \right)ds dr dt\\
     &=-\int_0^T\int_0^\delta \int_\Gamma \sigma \phi_r(s,0,t) u ds dt,
  \end{split}\tag{4.27}
\end{equation*}
\begin{equation*}\label{eq428}
\begin{split}
    |II| = &\left|-\int_0^T\int_0^\delta \int_{\Gamma} (\sigma     \phi_r u_r+\nabla_\Gamma \phi \cdot A \nabla_\Gamma u)(2H r+\kappa r^2)\right|  \\
    =&  O(\delta)  \sqrt{T}(\sigma\mu_1)^{1/4} (||\Phi^\delta_R(s,0)||_{L^\infty(\Gamma)} )^{1/2},
\end{split}\tag{4.28}
\end{equation*}
\begin{equation*}\label{eq429}
\begin{split}
     |III| =& \left| -\int_0^T\int_0^\delta \int_{\Gamma}(\nabla_s     \phi \cdot A \nabla_s u-\nabla_\Gamma \phi \cdot A \nabla_\Gamma u)(1+2Hr+\kappa r^2)\right|\\
     = & O\left(\delta \sqrt{\frac{\mu_1}{\mu_2}} \right)\left(1+\delta \sqrt{\frac{\mu_1}{\mu_2}}\right)\sqrt{T}(\sigma\mu_1)^{1/4} ||\Phi^\delta_R(s,0)||^{1/2}_{L^\infty(\Gamma)}.
\end{split}\tag{4.29}
\end{equation*}

In the remainder of this section, we investigate the asymptotic behavior of $RHS$ as $\delta \to 0$ in the following cases
$(1)$ $ \sigma\mu_1\to 0$, $(2)$ $ \sqrt{\sigma\mu_1}\to \gamma \in (0,\infty)$, $(3)$ $ \sigma\mu_1\to \infty$.

\medskip
\noindent 
\textbf{Case $1$}.  $\sigma\mu_1\to 0$ as $\delta \to 0$.
Using H\"older inequality and (\ref{AF22}), we get
\begin{equation*}
    \begin{split}
      |RHS| \leq& O(1) \int_0^T\left(\int_0^\delta \int_\Gamma \sigma \phi_r^2+ \nabla_\Gamma \phi \cdot A \nabla_\Gamma \phi \right)^{1/2} \left( \int_{\Omega}\nabla u \cdot A\nabla u dx \right)^{1/2}dt = O(\sqrt{T}) (\sigma\mu_1)^{1/4},
    \end{split}
\end{equation*}
where (\ref{eq424}) and Lemma \ref{Estimate1} were used. Thus, we have $\mathcal{L}[v,\xi]=0$, showing that $v$ satisfies  $\frac{\partial v}{\partial \textbf{n}}=0$ on $\Gamma \times (0, T).$

\medskip
\noindent 
\textbf{Case $2$}. $\sqrt{\sigma\mu_1} \to \gamma_1 \in (0, \infty)$ as $\delta \to 0$.

\noindent 
Subcase $ (2i)$.  $\mu_1 \delta \to 0$ as $\delta \to 0$. In this case, $h_1 = \mu_1\delta/\sqrt{\sigma\mu_1} \to 0$. Thanks to (\ref{eq428})- (\ref{eq429}), we obtain
\begin{equation*}
    I \to 0 \quad \text{ and } \quad  |II+III| \to 0,
\end{equation*}
where (\ref{eq424}) was used. Thus, we have  $\mathcal{L}[v,\xi]=0$, showing that $v$ satisfies $\frac{\partial v}{\partial \textbf{n}}=0$ on $\Gamma\times(0,T)$.

\smallskip

\noindent 
Subcase $(2ii)$. $\mu_1 \delta \to \beta_1 \in (0, \infty ]$ as $\delta \to 0$. In this case, $h_1 \to \beta_1/ \gamma_1 \in (0, \infty]$. In view of (\ref{eq428})-(\ref{eq429}) and (\ref{eq424}), we are led to
\begin{equation*}
    \mathcal{L}[v,\xi]=\gamma_1\int_0^T\int_{\Gamma} v\mathcal{K}_N^{\beta_1/\gamma_1}[\xi] \text{ for } c \in (0, 1], \quad  \quad  \mathcal{L}[v, \xi]=\gamma_1\int_0^T\int_{\Gamma} v\Lambda_N^{\beta_1/\gamma_1}[\xi] \text{ for } c = 0,
\end{equation*}
from which $v$ satisfies $ k\frac{\partial v}{\partial \textbf{n}}=\gamma_1 \mathcal{K}_N^{\beta_1/\gamma_1}[v]$ for $c \in (0, 1]$ and
 $ k\frac{\partial v}{\partial \textbf{n}}=\gamma_1 \Lambda_N^{\beta_1/\gamma_1}[v]$ for $c=0$ on $\Gamma \times (0, T)$.

\medskip
\noindent 
\textbf{Case $3$}. $\sigma\mu_1 \to \infty$ as $\delta \to 0$.

\noindent Subcase $(3i)$. $\mu_1\delta\to \beta_1 \in [0, \infty)$. In this case, $h_1 \to \beta_1/ \gamma_1 \in [0, \infty)$. Combining (\ref{eq428})-(\ref{eq429}) and (\ref{eq424}), we have
\begin{equation*}
    \begin{split}
      I& \to \beta_1 \int_0^T \int_{\Gamma} \widetilde{\Delta}_{\Gamma} \xi(s, 0, t)v(s,0, t) \quad  \text{   and  } \quad  |II + III| \to 0,
    \end{split}
\end{equation*}
as $\delta \to 0$. Thus, for $c \in (0, 1]$, we arrive at $\mathcal{L}[v, \xi] = \beta_1 \int_0^T \int_{\Gamma} v \widetilde{\Delta}_\Gamma \xi,$
from which $v$ satisfies $k\frac{\partial v}{\partial \textbf{n}}=\beta_1 \widetilde{\Delta}_\Gamma v$ on $\Gamma \times (0, T)$; for $c = 0$, it holds that
$\mathcal{L}[v, \xi] = \beta_1 \int_0^T \int_{\Gamma} v \frac{\partial^2 \xi}{ \partial \pmb{\tau}_1^2},
$
from which $v$ satisfies $k\frac{\partial v}{\partial \textbf{n}}=\beta_1 \frac{\partial^2 v}{ \partial \pmb{\tau}_1^2} $ on $\Gamma \times (0, T)$.

\smallskip
\noindent 
Subcase $(3ii)$. $\mu_1\delta\to \infty$ as $\delta \to 0$. In this case, $h_1 \to H \in [0, H]$ after passing to a subsequence. We first consider the case of $c\in(0, 1]$.  If $H=0$,  we have
$$\int_0^T \int_{\Gamma} \widetilde{\Delta}_\Gamma \xi(p, 0, t)v dsdt = 0,$$
leading to $v(\cdot) = m(t) $ on $\Gamma$ for almost $t\in (0,T)$. If $H\in (0, \infty]$, then we have $\int_0^T \int_{\Gamma} v\mathcal{K}_N^H[\xi] = 0$, implying that $v(\cdot) = m(t) $ on $\Gamma$ for almost $t\in (0, T)$. Thus, we choose a special test function $\xi=\xi(t)$ on $\Gamma$ and a constant extension such that $\overline{\xi} (s, r, t)= \xi(t)$ in $\Omega_2$, indicating that we have $\mathcal{L}[v,\xi]=0$.  Hence, $v$ satisfies $\int_\Gamma \frac{\partial v}{\partial \textbf{n}}=0$ on $\Gamma \times (0, T)$.

For the case of $c=0$, if $H=0$,  we have
$$\int_0^T \int_{\Gamma} v\frac{\partial^2 \xi}{ \partial \pmb{\tau}_1^2} dsdt = 0,
$$
implying that $v(\cdot) = v(s_2, t) $ on $\Gamma$ for almost $t\in (0,T)$. If $H\in (0, \infty]$, it holds that $\int_0^T \int_{\Gamma} v\Lambda_N^H[\xi] = 0, $
implying that $v(\cdot) = v(s_2, t) $ on $\Gamma$ for almost $t\in (0,T)$. 

We start with the proof by taking a test function $\xi$ satisfying $\xi_{s_1} = 0$ on $\Gamma$. We further choose the auxiliary function $\phi = \phi(s_2, r, t)$ by defining
\begin{equation*}\label{AF24}
    \left\{
        \begin{array}{ll}
             \sigma \phi_{rr}+ \mu_2\phi_{s_2s_2}=0, & \Gamma_2 \times(0, \delta),\\
              \phi(s_2, 0, t) = g(s_2), &\phi_r(s_2, \delta, t)=0,
        \end{array}
  \right.\tag{4.30}
\end{equation*}
where $g(s_2):= \xi(s_2, 0, t)$. Letting $r = R \sqrt{\sigma/\mu_2}$ and suppressing the $t$ dependence, we have $ \Phi(^\delta s_2, R) = \phi(s_2, R \sqrt{\sigma/\mu_2}, t)$. Substituting these into (\ref{AF24}) gives 
\begin{equation*}\label{eq4167}
    \left\{
        \begin{array}{ll}
              \Phi^\delta_{RR}+ \Phi^\delta_{s_2s_2}=0, &\Gamma_2 \times(0, h_2),\\
              \Phi^\delta(s_2, 0) = g(s_2), &\Phi^\delta_R(s_2, h_2)=0,
        \end{array}
  \right.
   \longrightarrow
  \left\{ \begin{array}{ll}
  	\Phi_{RR}+ \Phi_{s_2s_2}=0, &\Gamma_2 \times(0, H),\\
  	\Phi(s_2, 0) = g(s_2), &\Phi_R(s_2, H)=0,
  \end{array}
 \right.\tag{4.31}
\end{equation*}
where $h_2= \delta \sqrt{\mu_2/\sigma}$. Moreover, we define $\mathcal{D}^{H}_{N}[g](s) := \Phi_R(s_2, 0)$. To estimate the size of $\Phi_R(s_2, 0)$ as in (\ref{eq424}), we have 
\begin{equation*}\label{eq4168}
   \sqrt{\sigma\mu_2} \|     \Psi^\delta_R(s_2,0)\|_{L^\infty(\Gamma_2)}=\left\{
    \begin{aligned}
        &\sqrt{\mu_2\delta}\left(\xi_{s_2s_2}(s_2, 0, t)+O(h_2^2)\right),& & \text{ if }h_2 \to 0 \text{ as } \delta \to 0,\\
        & O(\sqrt{\sigma\mu_2}),& & \text{ if } h_2\in (0,\infty] \text{ as } \delta \to 0.
    \end{aligned} 
\right. \tag{4.32}
\end{equation*}
From now on, we consider cases $(a)$  $\sigma\mu_2\to 0$, $(b$ $\sqrt{\sigma\mu_2}\to \gamma_2 \in (0,\infty)$, $(c)$ $ \sigma\mu_2\to \infty$.

\smallskip
\noindent 
Subcase $(3iia)$. $\sigma\mu_2\to 0$ as $\delta \to 0$. Like in Case $1$, we have $\mathcal{L}[v, \xi]=0$, showing that $v$ satisfies  $\int_{\Gamma_1}\frac{\partial v}{\partial \textbf{n}}=0$ on $\Gamma \times (0, T)$.

\smallskip

\noindent 
Subcase $(3iib)$. $\sqrt{\sigma\mu_2}\to \gamma_2 \in (0, \infty)$ as $\delta \to 0$. Assume further that $\mu_2\delta \to 0$. In this case, $h \to 0$ as $\delta \to 0$. By the similar proof as in Case 2, we have $\mathcal{L}[v, \xi] = 0$. So, $v$ satisfies the effective boundary condition $\int_{\Gamma_1}\frac{\partial v}{\partial \textbf{n}}=0$ on $\Gamma\times(0,T)$.

On the other hand, if $\mu_2 \delta \to \gamma_2 \in (0, \infty]$, then by (\ref{eq428})-(\ref{eq429}) and (\ref{eq424}), we are led to
\begin{equation*}
    \mathcal{L}[v, \xi] = \gamma_1\int_0^T\int_{\Gamma} v\mathcal{D}_N^{\beta_2/\gamma_2}[\xi],
\end{equation*}
from which $v$ satisfies  $ \int_{\Gamma_1} \left(k\frac{\partial v}{\partial \textbf{n}}-\gamma_1 \mathcal{K}_N^{\beta_2/\gamma_2}[v] \right)= 0$ on $\Gamma \times (0, T)$.

\smallskip

\noindent 
Subcase $(3iic)$. $ \sigma\mu_2 \to  \infty$ as $\delta \to 0$. Assume further that $\mu_2\delta \to \beta_2 \in [0, \infty)$. In this case, $h_2 \to 0$. By virtue of (\ref{eq428})-(\ref{eq429}) and (\ref{eq424}), we get
\begin{equation*}
    \mathcal{L}[v, \xi] = \beta_2\int_0^T\int_{\Gamma} v\frac{\partial^2 \xi}{\partial \pmb{\tau}^2_2},
\end{equation*}
from which $v$ satisfies $ \int_{\Gamma_1} \left(k\frac{\partial v}{\partial \textbf{n}}-\beta_2 \frac{\partial^2 v}{\partial \pmb{\tau}^2_2} \right) = 0$ on $\Gamma \times (0, T)$.

If $\mu_2 \delta \to \infty$, then in this case, $h_2 \to H \in [0, \infty]$ after passing to a subsequence. If $H = 0$,  then divided both sides of the equation (\ref{eq426}) by $\mu_2\delta$ and sending $\delta\to 0$, we obtain $\int_0^T \int_{\Gamma} v \frac{\partial^2 \xi}{\partial \pmb{\tau}^2_2}  = 0$, implying that $v(\cdot) = m(t) $ on $\Gamma$ for almost $t\in (0,T)$. 

If $H\in (0,\infty]$, then divided both sides of \eqref{eq426} by $\sqrt{\sigma\mu_2}$ and sending $\delta\to 0$, we obtain $\int_0^T \int_{\Gamma} v \mathcal{D}_N^H[\xi] = 0$, implying that $v(\cdot) = m(t) $ on $\Gamma$ for almost $t\in (0,T)$. 

Therefore, by taking a special test function $\xi = \xi(t)$ on $\Gamma$ and using a constant extension $\overline{\xi} = \xi(t)$ in $\overline{\Omega}_2$, we obtain $\mathcal{L}[v, \xi]=0,$ implying that $v$ the boundary condition $\int_{\Gamma} \frac{\partial v}{\partial \textbf{n}}=0$ on $\Gamma \times (0, T)$. The proof is thus  completed.
\end{proof}

\section*{Acknowledgments}

The author is indebted to his advisor Professor Xuefeng Wang for his guidance and Dr. Yantao Wang for his helpful discussions. The author also thanks the anonymous referees for their helpful comments and suggestions, which have improved the manuscript.



%
%
%
%
%
%
%
%
%
%
%
%
\

\end{document}